\newtheorem{thm}{Theorem}[section]
\newtheorem{cor}[thm]{Corollary}
\newtheorem{prop}[thm]{Proposition}
\newtheorem{lem}[thm]{Lemma}
\theoremstyle{definition}
\newtheorem{defn}[thm]{Definition}
\newtheorem{exas}[thm]{Example}
\let\phi\varphi
\begin{document}
\title{Purely infinite simple ultragraph Leavitt path algebras}
\maketitle
\begin{center}
T.\,G.~Nam\footnote{Institute of Mathematics, VAST, 18 Hoang Quoc Viet, Cau Giay, Hanoi, Vietnam. E-mail address: \texttt{tgnam@math.ac.vn}} and N. D.~Nam\footnote{Faculty of Pedagogy, HaTinh University, Hatinh, Vietnam. E-mail address: \texttt{nam.nguyendinh@htu.edu.vn}  
		
	\ \ {\bf Acknowledgements}:   The first author was partially supported by the Vietnam Academy of Science and
	Technology grant CT0000.02/20-21.
}
 %The author  expresses their deep gratitude to Professor Gene Abrams (Department of Mathematics, University of Colorado, Colorado Springs, Coloralo, USA) for his valuable suggestions which led to the final shape of the paper.} 
	\end{center}
	
\begin{abstract} In this article, we give necessary and sufficient conditions under which the Leavitt path algebra $L_K(\mathcal{G})$ of an ultragraph $\mathcal{G}$ over a field $K$ is purely infinite simple and that it is von Neumann regular. Consequently, we obtain that every graded simple ultragraph Leavitt path algebra is either a locally matricial algebra, or  a full matrix ring over $K[x, x^{-1}]$, or a purely infinite simple algebra. %(the Trichotomy Principle for graded simple ultragraph Leavitt path algebras).
		\medskip

\textbf{Mathematics Subject Classifications 2020}: 16S88, 16S99,  05C25
		
\textbf{Key words}: Ultragraph Leavitt path algebras; Purely infinite simplicity; Graded simplicity; von Neumann regularity.
\end{abstract}
	
\section{Introduction}
The study of algebras associated to combinatorial objects has attracted a great deal of attention in the past years. Part of the interest in these algebras arise from the fact that many properties of the combinatorial object translate
into algebraic properties of the associated algebras and their applications to symbolic dynamics. There have been interesting examples of algebras associated to combinatorial objects among which we mention, for example, the following ones: graph $C^*$-algebras, Leavitt path algebras, higher rank graph algebras, Kumjian-Pask algebras, ultragraph $C^*$-algebras (we refer the reader to \cite{a:lpatfd} and \cite{AAS} for a more comprehensive list).

Ultragraphs were defined by Mark Tomforde in \cite{tomf:auatelaacaastg03} as an unifying approach to Exel-Laca and graph $C^*$-algebras. They have proved to be a key ingredient in the study of Morita equivalence of Exel-Laca and graph $C^*$-algebras \cite{kmst:gaelaauacutme}. Recently, Gon\c{c}alvas and Royer have established nice connections between ultragraph $C^*$-algebras and the symbolic dynamics of shift spaces over infinite alphabets (see \cite{gr:uassoia} and \cite{gr:iaessvuatca}).

The Leavitt path algebra associated to an ultragraph was defined by Imanfar, Pourabbas and Larki in \cite{ima:tlpaou}, along with a study of graded ideal structures and a proof of a Cuntz-Krieger uniqueness type theorem. Furthermore, it was shown in \cite{ima:tlpaou} that these algebras provide examples of algebras that can not be realized as the Leavitt path algebra of a graph; that is, the class of ultragraph path algebras is strictly larger than the class of Leavitt path algebras of graphs.
This raises the question of which results about Leavitt path
algebras of graphs can be generalized to ultragraph Leavitt path algebras, and whether results from the $C^*$-algebraic setting can be proved in the algebraic level. Recently, a
number of interesting results regarding this question have been obtained. We mention the following. Gon\c{c}alvas and Royer \cite{gr:iaproulpa} extended to Chen's construction to ultragraph Leavitt path algebras (see \cite{c:irolpa}) of irreducible representations of graph Leavitt path algebras; and Gon\c{c}alvas and Royer \cite{gr:saccfulpavpsgrt} realized ultragraph  Leavitt path algebras as partial skew group rings. Using this realization they characterized artinian ultragraph Leavitt path algebras and gave simplicity criteria for these algebras. The current article
is a continuation of this direction. We give von Neumann reularity and purely infinite simplicity criteria for ultragraph Leavitt path algebras. Consequently, we provide the Trichotomy Principle for graded simple ultragraph Leavitt path algebras.

The article is organized as follows. In section 2, for the reader's convenience, we provide subsequently necessary notions and facts on ultragraphs and ultragraph Leavitt path algebras. We show that the ultragraph Leavitt path algebras arising from acyclic ultragraphs are precisely the von Neumann regular ultragraph Leavitt path algebras, and in this case they are exactly locally matricial algebras (Theorem~\ref{regularity}). In section 3, we give necessary and sufficient conditions on an ultragraph $\mathcal{G}$ so that  $L_K(\mathcal{G})$ is purely infinite simple (Theorem~\ref{purelysimple}). Using Theorems~\ref{regularity} and \ref{purelysimple} and \cite[Theorem~3.4]{ima:tlpaou}, we obtain Theorem~\ref{TrichotomyPrinciple}, showing that every graded simple ultragraph Leavitt path algebra is either a locally matricial algebra, or  a full matrix ring over $K[x, x^{-1}]$, or a purely infinite simple algebra.

\section{Regularity conditions for ultragraph Leavitt path algebras}
The main goal of this section is to establish the equivalence of
the following conditions for an ultragraph $\mathcal{G}$ and a field $K$ (Theorem~\ref{regularity}): (1) $L_K(\mathcal{G})$ is von Neumann regular. (2) $\mathcal{G}$ is acyclic. (3) $L_K(\mathcal{G})$ is a locally matricial $K$-algebra. 

We begin this section by recalling some notions and notes of ultragraph theory introduced by Tomforde in \cite{tomf:auatelaacaastg03} and \cite{tomf:soua}.

An \textit{ultragraph} $\mathcal{G} = (G^0, \mathcal{G}^1, r, s)$ consists of a countable set of vertices $G^0$, a countable set of edges $\mathcal{G}^1$, and functions $s : \mathcal{G}^1 \longrightarrow G^0$ and $r : \mathcal{G}^1 \longrightarrow \mathcal{P}(G^0)\setminus \left\{ \emptyset\right\}$, where $\mathcal{P}(G^0)$ denotes the set of all subsets of $G^0$.

A vertex $v \in G^0$ is called a \textit{sink} if $s^{-1}(v)=\emptyset$ and $v$ is called an \textit{infinite emitter} if $|s^{-1}(v)|=\infty$. A \textit{singular vertex} is a vertex that is either a sink or an infinite emitter. The set of all singular vertices is denoted by $\text{Sing}(\mathcal{G})$. 
A vertex $v \in G^0$ is called a \textit{regular vertex} if $0<|s^{-1}(v)|<\infty$. 

For an ultragraph $\mathcal{G} = (G^0, \mathcal{G}^1, r, s)$ we let $\mathcal{G}^0$ denote the smallest subset of $\mathcal{P}(G^0)$ that contains $\{v\}$ for all $v\in G^0$, contains $r(e)$ for all $e\in \mathcal{G}^1$, and is closed under finite unions and finite intersections. The following lemma gives us another description of $\mathcal{G}^0$.

\begin{lem}[{\cite[Lemma 2.12]{tomf:auatelaacaastg03}}]\label{g^0}	
If $\mathcal{G} := (G^0, \mathcal{G}^1, r, s)$ is an ultragraph, then
$\mathcal{G}^0 = \{\bigcap_{e\in X_1}r(e)\cup \cdots\cup \bigcap_{e\in X_n}r(e)\cup F\mid X_1, \hdots, X_n \text{ are finite subsets of } \mathcal{G}^1\\ \text{ and } F \text{ is a finite subset of } G^0\}.$ Furthermore, $F$ may be chosen to be disjoint from $\bigcap_{e\in X_1}r(e)\cup \cdots\cup \bigcap_{e\in X_n}r(e)$.
\end{lem}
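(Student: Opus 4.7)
The natural approach is to let $\mathcal{A}$ denote the collection on the right-hand side of the claimed identity and to prove $\mathcal{G}^0 = \mathcal{A}$ by double inclusion, exploiting the fact that $\mathcal{G}^0$ is defined as the smallest subset of $\mathcal{P}(G^0)$ containing the singletons $\{v\}$ and the range sets $r(e)$ and closed under finite unions and intersections.

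The inclusion $\mathcal{A} \subseteq \mathcal{G}^0$ is essentially by inspection: each $\bigcap_{e\in X_i} r(e)$ lies in $\mathcal{G}^0$ by closure under finite intersections, each $\{v\}\subseteq F$ lies in $\mathcal{G}^0$ by definition, and the whole expression is a finite union of such sets, hence again in $\mathcal{G}^0$. For the reverse inclusion, by minimality of $\mathcal{G}^0$ it suffices to show that $\mathcal{A}$ contains the generators and is closed under finite unions and intersections. The generators are clear: $\{v\}$ is obtained by taking $F=\{v\}$ with no intersection part, and $r(e)$ by taking $n=1$, $X_1=\{e\}$, and $F=\emptyset$. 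Closure under unions amounts to concatenating the two lists of $X_i$'s and taking the union of the finite parts.

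The only real work is closure under finite intersections. Expanding via distributivity,
\[
\Bigl(\bigcup_{i=1}^{m}\bigcap_{e\in X_i} r(e) \cup F\Bigr) \cap \Bigl(\bigcup_{j=1}^{n}\bigcap_{e\in Y_j} r(e) \cup F'\Bigr),
\]
the cross-terms are of three shapes: $\bigcap_{e\in X_i} r(e) \cap \bigcap_{e\in Y_j} r(e) = \bigcap_{e\in X_i\cup Y_j} r(e)$, which is again of the required form; $\bigcap_{e\in X_i} r(e) \cap F'$ and $F\cap \bigcap_{e\in Y_j} r(e)$, each a subset of the finite set $F'$ (resp.\ $F$), hence finite; and $F\cap F'$, again finite. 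Collecting the finite pieces into a single finite subset of $G^0$ yields an expression in $\mathcal{A}$, establishing closure under intersections. For the final disjointness clause, given any representation simply replace $F$ by $F\setminus\bigl(\bigcap_{e\in X_1} r(e)\cup\cdots\cup\bigcap_{e\in X_n} r(e)\bigr)$: the total union is unchanged while the new $F$ is disjoint from the rest. The intersection computation is the one place where a short argument is actually required; everything else unpacks directly from the minimal-generation definition of $\mathcal{G}^0$.
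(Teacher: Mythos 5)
Your proof is correct. The paper itself does not prove this lemma --- it is quoted verbatim from Tomforde's work (\cite[Lemma 2.12]{tomf:auatelaacaastg03}) --- and your argument (double inclusion via minimality, with the distributivity computation $\bigcap_{e\in X_i}r(e)\cap\bigcap_{e\in Y_j}r(e)=\bigcap_{e\in X_i\cup Y_j}r(e)$ handling closure under intersections and the cross-terms with the finite sets absorbed into a new finite $F$) is exactly the standard proof given in that source, so there is nothing to add.
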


A \textit{finite path} in an ultragraph $\mathcal{G}$ is either an element of $\mathcal{G}^0$ or a sequence $\alpha_1\alpha_2\cdots\alpha_n$ of edges with $s(\alpha_{i+1})\in r(\alpha_i)$ for all $1\le i\le n-1$ and we say that the path $\alpha$ has \textit{length} $|\alpha| := n$. We consider the elements of $\mathcal{G}^0$ to be paths of length $0$. We denote by $\mathcal{G}^*$ the set of all finite paths in $\mathcal{G}$. The maps $r$ and $s$ extend naturally to $\mathcal{G}^*$. Note that when $A\in \mathcal{G}^0$ we define $s(A) = r(A) = A$. An \textit{infinite path} in $\mathcal{G}$  is a sequence $e_1e_2\cdots e_n\cdots$ of edges in $\mathcal{G}$ such that $s(e_{i+1})\in r(e_i)$ for all $i\geq 1$.

If $\mathcal{G}$ is an ultragraph, then a \textit{cycle} in $\mathcal{G}$ is a path $\alpha=\alpha_1\alpha_2\cdots\alpha_{|\alpha|}\in \mathcal{G}^*$ with $|\alpha|\ge 1$ and $s(\alpha)\in r(\alpha)$. An \textit{exit} for a cycle $\alpha$ is one of the following:
\begin{itemize} 
\item[(1)] an edge $e\in \mathcal{G}^1$ such that there exists an $i$ for which $s(e)\in r(\alpha_i)$ but $e\ne \alpha_{i+1}$.
 
\item[(2)] a sink $w$ such that $w\in r(\alpha_i)$ for some $i$. 
\end{itemize}

In \cite{tomf:auatelaacaastg03} Mark Tomforde  introduced  the $C^*$-algebra of an ultragraph as an unifying approach to Exel-Laca and graph $C^*$-algebras. Imanfar, Pourabbas and  Larki in \cite{ima:tlpaou}, introduced the Leavitt path algebra of an ultragraph, along with a study of ideals and a proof of a
Cuntz-Krieger uniqueness type theorem.

\begin{defn}[{cf. \cite[Theorem 2.11]{tomf:auatelaacaastg03} and \cite[Definition 2.1]{ima:tlpaou}}]\label{utraLevittpathalg}
Let $\mathcal{G}$ be an ultragraph and $K$ a field. The \textit{Leavitt path algebra $L_K(\mathcal{G})$ of $\mathcal{G}$ with coefficients in $K$} is the $K$-algebra generated by the set $\{s_e, s^*_{e}\mid e\in\mathcal{G}^1\}$ $\cup\left\{p_{_A}\mid A\in\mathcal{G}^0\right\}$, satisfying the following relations for all $A,B \in \mathcal{G}^0$ and  $e, f\in\mathcal{G}^1$:
\begin{itemize} 	
\item[(1)] $p_{_\emptyset}=0, p_{_A}p_{_B} = p_{_{A\cap B}}$ and $p_{_{A\cup B}} = p_{_A}+p_{_B}-p_{_{A\cap B}}$;
\item[(2)] $p_{s(e)}s_e = s_e = s_ep_{r(e)}$ and $p_{r(e)}s_{e}^* = s_{e}^* = s_{e}^*p_{s(e)}$;
\item[(3)] $s_{e}^*s_f = \delta_{e, f}p_{r(e)}$;
\item[(4)] $p_v = \sum_{s(e)=v}s_es_{e}^*$ for any regular vertex $v$;
\end{itemize} where $p_v$ denotes $p_{_{\{v\}}}$ and $\delta$ is the Kronecker delta.
\end{defn}

We usually denote $s_A := p_{_A}$ for $A\in\mathcal{G}^0$ and $s_\alpha := s_{e_1}\cdots s_{e_n}$ for $\alpha = e_1\cdots e_n \in \mathcal{G}^*$. It is easy to see that the mappings given by $p_{_A}\longmapsto p_{_A}$ for $A\in \mathcal{G}^0$, and $s_e\longmapsto s^*_e$, $s^*_e\longmapsto s_e$ for $e\in \mathcal{G}^1$, produce an involution on the algebra $L_K(\mathcal{G})$, and for any path $\alpha = \alpha_1\cdots\alpha_n$ there exists $s^*_{\alpha} := s^*_{e_n}\cdots s^*_{e_1}$. Also, $L_K(\mathcal{G})$ has the following \textit{universal property}: if $\mathcal{A}$ is a $K$-algebra generated by a family of elements $\{b_A, c_e, c^*_e\mid A\in \mathcal{G}^0, e\in \mathcal{G}^1\}$ satisfying the relations analogous to (1) - (4)  in Definition~\ref{utraLevittpathalg}, then there always exists a $K$-algebra homomorphism $\varphi: L_K(\mathcal{G})\longrightarrow \mathcal{A}$ given by $\varphi(p_A) = b_A$, $\varphi(s_e) = c_e$ and $\varphi(s^*_e) = c^*_e$.	Furthermore, we denote another useful properties as follows.

\begin{lem}\label{graded}
If $\mathcal{G}$ is an ultragraph and $K$ is a field, then the Leavitt path algebra $L_K(\mathcal{G})$ has the following properties:

$(1)$ $(${\cite[Theorem 2.10]{ima:tlpaou} \textnormal{ and } \cite[Theorem 3.10]{gr:saccfulpavpsgrt}}$)$ All elements of the set $\{p_A, s_e, s^*_e\mid A\in \mathcal{G}^0, e\in \mathcal{G}^1\}$ are nonzero.

$(2)$ $(${\cite[Theorem~2.9]{ima:tlpaou}}$)$ $L_K(\mathcal{G})$ is of the form
\begin{center}
$\textnormal{Span}_K\{s_{\alpha}p_{_A}s_{\beta}^*:\alpha,\beta \in \mathcal{G}^*, A\in \mathcal{G}^0 \textnormal{ and } r(\alpha)\cap A \cap r(\beta)\neq \emptyset\}$.	
\end{center}
Furthermore, $L_K(\mathcal{G})$ is a $\mathbb{Z}$-graded $K$-algebra by the grading 
\begin{center}	
$L_K(\mathcal{G})_n=\textnormal{Span}_K\{s_{\alpha}p_{_A}s_{\beta}^*\mid\alpha,\beta \in \mathcal{G}^*, A\in \mathcal{G}^0 \textnormal{ and } |\alpha| -|\beta|=n\}$\quad $(n\in \mathbb{Z})$.
\end{center}
\end{lem}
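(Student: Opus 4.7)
The plan is to attack the two parts of the lemma separately, since they call for rather different techniques.

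For part (1), I would construct an explicit representation of $L_K(\mathcal{G})$ on a $K$-vector space that sends each generator to a nonzero operator; the universal property in Definition~\ref{utraLevittpathalg} then forces the generator itself to be nonzero in $L_K(\mathcal{G})$. A natural choice is a ``path module'' $M$ whose basis is indexed by finite and infinite paths of $\mathcal{G}$, together with auxiliary symbols accounting for the pieces of $\mathcal{G}^0$ described by Lemma~\ref{g^0}. Define $p_A$ as the projection onto paths whose source lies in $A$, $s_e$ as concatenation by $e$, and $s_e^*$ as the corresponding deconcatenation operator. The main technical point is verifying that relations (1)--(4) of Definition~\ref{utraLevittpathalg} hold on $M$: (1) reduces to set-theoretic identities for the projections, (2) is immediate from the definitions, (3) holds because deconcatenating $e$ followed by concatenating $f$ returns $0$ unless $e=f$, and (4) is guaranteed at regular vertices by splitting a vertex projection along its outgoing edges. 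Once the representation is in hand, each $p_A$ with $A\ne \emptyset$ acts nontrivially on a basis path starting in $A$, each $s_e$ moves a path starting at $s(e)$ to a path of length one greater, and each $s_e^*$ is the adjoint of a nonzero operator.

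For part (2), the spanning statement is a normal-form manipulation. Using relation (2), any monomial in the generators can be rewritten as $s_{e_1}\cdots s_{e_m}\,p_{A_1}\cdots p_{A_k}\,s_{f_n}^*\cdots s_{f_1}^*$; relation (1) then collapses $p_{A_1}\cdots p_{A_k}$ to a single $p_A$ with $A=A_1\cap\cdots\cap A_k\in\mathcal{G}^0$, while relation (3) eliminates any stray $s_{e}^*s_{f}$ appearing in the middle of a product. If the resulting element satisfies $r(\alpha)\cap A\cap r(\beta)=\emptyset$, then sliding the $p_A$ factor across using (2) reduces it to $p_\emptyset=0$, so we may restrict attention to the claimed nonzero terms and obtain the stated spanning set. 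For the $\mathbb{Z}$-grading, assign $\deg(s_e)=1$, $\deg(s_e^*)=-1$, and $\deg(p_A)=0$, and observe that each of the defining relations (1)--(4) is homogeneous of degree $0$; the universal property then endows $L_K(\mathcal{G})$ with a $\mathbb{Z}$-grading in which $s_\alpha p_A s_\beta^*$ lies in degree $|\alpha|-|\beta|$, giving the stated decomposition.

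The main obstacle will be part (1): because $\mathcal{G}^0$ is closed under intersections, the projections $p_A$ must satisfy Boolean-algebra-like identities, and the path representation has to be designed so that these identities are not accidentally strengthened to a collapse of a nonempty $A$ to $0$. This is precisely why it is necessary to incorporate not only paths but also finer pieces drawn from $\mathcal{G}^0$ (via Lemma~\ref{g^0}) into the basis of the representing module. Carrying this out in detail is the content of the cited constructions in \cite{ima:tlpaou} and \cite{gr:saccfulpavpsgrt}; the latter realizes $L_K(\mathcal{G})$ as a partial skew group ring and provides an elegant alternative route to the nonvanishing statement that could be used in place of an ad hoc module construction.
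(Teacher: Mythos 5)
The paper does not prove this lemma at all: both parts are quoted verbatim from the literature (part (1) from \cite[Theorem 2.10]{ima:tlpaou} and \cite[Theorem 3.10]{gr:saccfulpavpsgrt}, part (2) from \cite[Theorem 2.9]{ima:tlpaou}), so there is no in-paper argument to compare against. Measured on its own terms, your outline of part (2) is correct and standard: the relations reduce every monomial to the form $s_{\alpha}p_{_A}s_{\beta}^*$, the vanishing of terms with $r(\alpha)\cap A\cap r(\beta)=\emptyset$ follows from $p_{_\emptyset}=0$, and the grading descends from the free algebra because all defining relations are homogeneous (note that relation (4) is degree $0$ since each $s_es_e^*$ has degree $1+(-1)=0$). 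This matches what the cited reference does.

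Part (1) is where your sketch, as written, would fail. If the representing module has a basis of finite paths (including the length-zero paths, i.e.\ elements of $\mathcal{G}^0$, which you must include for $p_{_A}$ to act nontrivially when $A$ contains only sinks), then relation (4) breaks at a regular vertex $v$: the basis element corresponding to the length-zero path $\{v\}$ is fixed by $p_v$ but killed by every $s_e^*$, so $p_v\ne\sum_{s(e)=v}s_es_e^*$ on that module. The standard repair is to take the basis to consist of \emph{boundary} paths only --- infinite paths together with finite paths terminating at singular vertices or, in the ultragraph setting, at suitable elements of $\mathcal{G}^0$ with no regular ``interior'' --- and this choice, not merely the Boolean-algebra compatibility of the $p_{_A}$ that you flag, is the real technical content. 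You do acknowledge that the details live in \cite{ima:tlpaou} and \cite{gr:saccfulpavpsgrt} (the latter via partial skew group rings), which is exactly the posture the paper itself takes, but if you intend the module construction to count as a proof you need to fix the basis so that relation (4) is actually verified.
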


In the light of Lemma~\ref{graded}, an element $x\in L_K(\mathcal{G})_n$ is called a \textit{homogeneous element of degree} $n$. Recall that a ring $R$ is said to have \textit{local units} if every finite subset of $R$ is contained in a subring of the form $eRe$ where $e = e^2\in R$. The following lemma shows that every ultragraph Leavitt path algebra is an algebra with local units.

\begin{lem}\label{localunit}
Let $\mathcal{G}$ be an ultragraph and $K$ a field. Then $L_K(\mathcal{G})$ is an algebra with local units $($specifically, the set of local units of $L_K(\mathcal{G})$ is given by $\{p_{_A}\mid A\in \mathcal{G}^0\})$. Moreover, $L_K(\mathcal{G})$ is unital if and only if $G^0 \in \mathcal{G}^0$; in this case the identity element is $1=p_{G^0}$.  
\end{lem}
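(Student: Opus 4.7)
The plan is to prove both assertions directly from Definition~\ref{utraLevittpathalg} together with the spanning-set description of Lemma~\ref{graded}(2). For the local units, I would start with an arbitrary finite subset $X\subseteq L_K(\mathcal{G})$, use Lemma~\ref{graded}(2) to expand each element as a finite $K$-linear combination of monomials $s_\alpha p_B s_\beta^*$, and then build a single $A\in\mathcal{G}^0$ that absorbs them all simultaneously. The building blocks are: if $|\alpha|\ge 1$, relation (2) gives $p_{\{s(\alpha)\}}s_\alpha=s_\alpha$, while if $|\alpha|=0$ the symbol $s_\alpha$ is itself a projection $p_C$ for some $C\in\mathcal{G}^0$, so $p_Cp_C=p_C$ by relation (1); dually for $s_\beta^*$. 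Letting $A$ be the union of all these finitely many singletons and base sets and invoking closure of $\mathcal{G}^0$ under finite unions, one obtains $A\in\mathcal{G}^0$ with $p_Ax=x=xp_A$ for every $x\in X$; since $p_A^2=p_{A\cap A}=p_A$, this places $X$ inside the corner $p_AL_K(\mathcal{G})p_A$.

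For the unitality statement, the forward direction is straightforward: when $G^0\in\mathcal{G}^0$, the element $p_{G^0}$ is defined, and relations (1) and (2), together with the facts that $A\subseteq G^0$ for every $A\in\mathcal{G}^0$ and $s(e),r(e)\subseteq G^0$ for every edge $e$, show that $p_{G^0}$ acts as a two-sided identity on each generator of $L_K(\mathcal{G})$. Conversely, if $L_K(\mathcal{G})$ is unital with identity $1$, applying the local unit property just established to the singleton $\{1\}$ yields some $A\in\mathcal{G}^0$ with $p_A\cdot 1=1$, hence $p_A=1$. Then for every $v\in G^0$ one has $p_v=p_v\cdot 1=p_vp_A=p_{\{v\}\cap A}$; should any $v\notin A$, this would force $p_v=0$, contradicting the nonvanishing recorded in Lemma~\ref{graded}(1). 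Therefore $G^0\subseteq A\subseteq G^0$, and $G^0=A\in\mathcal{G}^0$.

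The only point requiring genuine care is the bookkeeping in the local unit construction, specifically the need to treat length-zero paths uniformly with length-positive ones (for $|\alpha|=0$ the symbol $s_\alpha$ is already itself an idempotent $p_C$ and is not literally of the form $p_{\{s(\alpha)\}}$) and to confirm that the finite union collected really does lie in $\mathcal{G}^0$. Beyond that, the argument is a direct unpacking of the defining relations, with Lemma~\ref{graded}(1) invoked only at the very end to rule out the degenerate possibility $v\notin A$.
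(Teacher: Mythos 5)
Your proof is correct and follows essentially the same route as the paper's: for the local units you expand elements via Lemma~\ref{graded}(2) and collect the finitely many source singletons (for paths of length $\ge 1$) and base sets (for length-zero paths) into one $A\in\mathcal{G}^0$, exactly as the paper does. The only cosmetic difference is in the converse of the unitality claim, where you deduce $1=p_{_A}$ from the local-unit property and then force $G^0\subseteq A$ via $p_v=p_{_{\{v\}\cap A}}\neq 0$, whereas the paper re-expands $1$ as a sum of monomials and derives $p_v\cdot 1=0$ for a vertex outside the collected support; both hinge on the same observation and on Lemma~\ref{graded}(1).
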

\begin{proof} Consider a finite subset $\{a_i\}^t_{i=1}$ of
$L_K(\mathcal{G})$ and use Lemma~\ref{graded} (2) to write
\begin{center}
$a_i = \sum_{s=1}^{n_i} k^i_s s_{p^i_s}p_{_{A^i_s}}s^*_{q^i_s}$\quad where $k^i_s\in K\setminus \{0\}$, and $p^i_s, A^i_s, q^i_s\in \mathcal{G}^*$. 	
\end{center}
Let $$A:= \{s(p^i_s), s(q^i_s)\mid |p^i_s|\geq 1, |q^i_s|\geq 1\} \bigcup (\bigcup_{|p^i_s|=0=|q^i_s|}s(p^i_s)\cup s(q^i_s))\subseteq G^0.$$
We then have $A\in \mathcal{G}^0$ and $p_{_A} a_i = a_i = a_ip_{_A}$ for all $i$, and so $L_K(\mathcal{G})$ is an algebra with local units.

The remainder follows from \cite[Lemma 2.12]{ima:tlpaou} and, just for the reader's convenience, we briefly sketch it here. Namely, assume that $L_K(\mathcal{G})$ is unital and write 
\begin{center}
$1_{L_K(\mathcal{G})} = \sum_{i=1}^{n} k_i s_{p_i}p_{_{A_i}}s^*_{q_i}$\quad where $k_i\in K\setminus \{0\}$, and $p_i, A_i, q_i\in \mathcal{G}^*$.	
\end{center}
Let $$B:= \{s(p_i)\mid |p_i|\geq 1\} \bigcup (\bigcup_{|p_i|=0}s(p_i))\subseteq G^0.$$ It is obvious that $B\in  \mathcal{G}^0$. If $G^0\notin \mathcal{G}^0$, then there exists an element $v\in G^0\setminus B$, and $$p_v = p_v\cdot 1_{L_K(\mathcal{G})} = p_v(\sum_{i=1}^{n} k_i s_{p_i}p_{_{A_i}}s^*_{q_i}) = 0,$$ a contradiction, which shows that $G^0\in \mathcal{G}^0$. The converse is obvious, thus finishing the proof.
\end{proof}  

\begin{lem}\label{generators}
Let $\mathcal{G}$ be an ultragraph and $K$ a field. Then the algebra $L_K(\mathcal{G})$ is generated by $\left\{s_e, s_e^*\mid e\in  \mathcal{G}^1\right\} \cup \left\{p_v\mid v \in \text{Sing}(\mathcal{G})\right\}$. 
\end{lem}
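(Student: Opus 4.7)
The plan is to show that the subalgebra $S\subseteq L_K(\mathcal{G})$ generated by $\{s_e, s_e^*\mid e\in \mathcal{G}^1\}\cup\{p_v\mid v\in\text{Sing}(\mathcal{G})\}$ already contains every projection $p_A$ with $A\in\mathcal{G}^0$; since $L_K(\mathcal{G})$ is by definition generated by all the $s_e, s_e^*$ together with all the $p_A$, this inclusion forces $S=L_K(\mathcal{G})$.

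I would first produce the ``range projections''. Applying relation (3) of Definition~\ref{utraLevittpathalg} with $f=e$ yields $s_e^*s_e=p_{r(e)}$, so every $p_{r(e)}$ lies in $S$. Next, I would show $p_v\in S$ for every $v\in G^0$: if $v$ is singular this is by hypothesis, and if $v$ is regular, relation (4) gives $p_v=\sum_{s(e)=v}s_es_e^*$, which is a finite sum of elements of $S$.

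To reach an arbitrary $p_A$, I would invoke Lemma~\ref{g^0} to write
$$A=\bigcap_{e\in X_1}r(e)\cup\cdots\cup\bigcap_{e\in X_n}r(e)\cup F,$$
with each $X_i$ a finite subset of $\mathcal{G}^1$ and $F$ a finite subset of $G^0$. Applying relation (1) of Definition~\ref{utraLevittpathalg} repeatedly, the intersection identity $p_{B\cap C}=p_Bp_C$ realises each $p_{\bigcap_{e\in X_i}r(e)}$ as a product of the $p_{r(e)}$, while the union identity $p_{B\cup C}=p_B+p_C-p_Bp_C$, together with $p_{\{v_1,\dots,v_k\}}=\sum_j p_{v_j}$ (an inductive application of the same formula, using that distinct singletons meet in $\emptyset$ and $p_\emptyset=0$), assembles the whole union from ingredients already in $S$. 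Hence $p_A\in S$, completing the argument.

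There is no genuine obstacle here; the proof is an algebraic bookkeeping exercise. The only piece of ``content'' is the identity $s_e^*s_e=p_{r(e)}$, which is precisely what allows the $p_{r(e)}$ to be dropped from any generating set, combined with Lemma~\ref{g^0} which guarantees that every element of $\mathcal{G}^0$ is built from the $r(e)$ and singletons by finite Boolean operations.
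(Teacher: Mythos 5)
Your proposal is correct and follows essentially the same route as the paper's proof: both reduce the problem to showing $p_A$ lies in the subalgebra for every $A\in\mathcal{G}^0$, using $s_e^*s_e=p_{r(e)}$, relation (4) for regular vertices, the decomposition of Lemma~\ref{g^0}, and the Boolean identities of relation (1) (products for intersections, inclusion--exclusion for unions). The only cosmetic difference is that the paper exploits the disjointness of $F$ from the union of the $\bigcap_{e\in X_i}r(e)$ to write $p_A$ as a plain sum, whereas you handle the final union with the general formula $p_{B\cup C}=p_B+p_C-p_Bp_C$; both work.
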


\begin{proof} 
Let $\mathcal{A}$ be the $K$-subalgebra of $L_K(\mathcal{G})$ generated by  $\{ s_e, s_e^*\mid e\in  \mathcal{G}^1\} \cup \{p_v\mid v \in \text{Sing}(\mathcal{G})\}$. We claim that $L_K(\mathcal{G})\subseteq \mathcal{A}$. To do so, it is sufficient to show that $p_A\in \mathcal{A}$ for all $A\in \mathcal{G}^0$. Take any $A\in \mathcal{G}^0$. By Lemma~\ref{g^0}, there exist finite subsets  $X_1, \cdots, X_n$ of $\mathcal{G}^1$ and finite subset $F$ of $\mathcal{G}^0$ such that
\begin{center}
$A = \bigcap_{e\in X_1}r(e)\cup \cdots\cup \bigcap_{e\in X_n}r(e)\cup F \text{ and } F\cap (\bigcap_{e\in X_1}r(e)\cup \cdots\cup \bigcap_{e\in X_n}r(e)) = \varnothing$.
\end{center}

Note that $p_v\in \mathcal{A}$ for every singular vertex $v$. Also, if $v$ is a regular vertex, then $p_v=\sum_{e\in s^{-1}(v)}s_es_e^* \in \mathcal{A}$. This implies that $p_{_F} = \sum_{v\in F}p_v\in \mathcal{A}$.

For every $e \in \mathcal{G}^1$, we always have that $p_{r(e)}=s_e^*s_e \in \mathcal{A}$, and so $$p_{_{A_i}} = \prod_{e\in X_i}p_{r(e)}\in \mathcal{A}$$ for all $1\leq i\leq n$, where $A_i := \bigcap_{e\in X_i}r(e)$. By induction on $n$ we obtain that $$p_{\bigcup\limits_{i=1}^nA_i} =\sum\limits_{k=1}^n(-1)^{k+1}\sum\limits_{1\le i_1<i_2<...<i_k\le n}p_{_{A_{i_1}\cap A_{i_2}\cap...\cap A_{i_n}}}\in \mathcal{A}.$$ This implies that $p_{_A} = p_{\bigcup\limits_{i=1}^nA_i} + p_{_F}\in \mathcal{A}$, and hence $L_K(\mathcal{G})\subseteq \mathcal{A}$. The inverse inclusion is obvious, and so $L_K(\mathcal{G}) = \mathcal{A}$, thus finishing our proof. 	
\end{proof}

Let $\mathcal{G}= (G^0, \mathcal{G}^1, r, s)$ be an ultragraph and let $F$ be a finite subset of $\mathcal{G}^1 \cup \text{Sing}(\mathcal{G})$ (where we denote by $\text{Sing}(\mathcal{G})$ the set of all singular vertecies of $\mathcal{G}$). Write $F^0 := F$ $\cap$ $\text{Sing}(\mathcal{G})$ and $F^1 :=F\cap \mathcal{G}^1=\{e_1, e_2,\hdots,e_n\}$. Following \cite{ima:tlpaou}, we construct a finite graph $G_F$ as follows. For each $\omega=(\omega_1,\hdots,\omega_n)\in \{0,1\}^n \setminus \left\{0^n\right\}$, we define
\begin{center}
$r(\omega):=\bigcap_{\omega_i=1}r(e_i)\setminus \bigcup_{\omega_j=0}r(e_j)$ and $R(\omega):=r(\omega)\setminus F^0$. 	
\end{center} 
Notice that $r(\omega)\cap r(\nu) = \varnothing$ for distinct $\omega, \nu\in \{0,1\}^n \setminus \{0^n\}$. Let 
\begin{center}
$\Gamma_0:= \{\omega\in \{0,1\}^n \setminus \{0^n\}\mid$ there are vertices $v_1,\hdots, v_m$ such that  	
\end{center}
\hspace{3 cm}$R(w)=\{v_1, \cdots, v_m\}$ and $\emptyset \ne s^{-1}(v_i) \subseteq F^1$ for $1\le i \le m\}$	
and 
$$\Gamma_F:= \{\omega\in \{0,1\}^n \setminus \{0^n\}\mid R(w)\ne \varnothing \text{ and } \omega\notin \Gamma_0\}.$$
	
Now we define the finite graph $G_F=(G_F^0, G_F^1, r_F, s_F)$ as follows:
\begin{equation*}
\begin{array}{rcl}
G_F^0&:=&F^0 \cup F^1 \cup \Gamma_F, \text{ and }\\ \\
G_F^1&:=& \{(e,f)\in F^1\times F^1\mid s(f)\in r(e)\}\\
&& \cup\, \{(e,v)\in F^1\times F^0\mid v \in r(e)\}\\
&&\cup\,  \{(e,\omega)\in F^1\times\Gamma_F\mid\omega_i=1 \text{ when } e=e_i\}
\end{array}
\end{equation*}
with
\begin{equation*}
\begin{array}{rcl}
s_F((e,f))=e&\hspace{2 cm} &s_F((e,v))=e \hspace{2,5 cm} s_F((e,\omega))=e\\ 
r_F((e,f))=f& \hspace{2 cm}&r_F((e,v))=v \hspace{2,5 cm} r_F((e,\omega))=\omega.
\end{array}
\end{equation*}

As usual, an ultragraph is called \textit{acyclic} if it has no cycles. The following lemma gives us a criterion for acyclic ultragraphs.

\begin{lem}\label{acyclic}
An ultragraph $\mathcal{G}$ is acyclic if and only if $G_F$ is acyclic for every non-empty finite subset $F$ of $\mathcal{G}^1\cup \rm{Sing}(\mathcal{G})$.
\end{lem}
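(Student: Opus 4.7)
The plan is to prove the lemma by the direct observation that, in the finite graph $G_F$, every edge has its source in $F^1$, so the vertices in $F^0 \cup \Gamma_F$ are sinks and any cycle of $G_F$ must live entirely in the subset $F^1$ of $G_F^0$; and a cycle of $G_F$ supported on $F^1$ corresponds in an obvious way to a cycle of $\mathcal{G}$ built out of the edges of $F^1$.

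For the forward direction, I would argue by contrapositive: suppose some $G_F$ contains a cycle. Since no edge of $G_F$ has source in $F^0 \cup \Gamma_F$, such a cycle is of the form
\[
e_{i_1} \xrightarrow{(e_{i_1},e_{i_2})} e_{i_2} \xrightarrow{} \cdots \xrightarrow{} e_{i_k} \xrightarrow{(e_{i_k},e_{i_1})} e_{i_1},
\]
with all $e_{i_j}\in F^1$. By the definition of $G_F^1$ for edges of type $(e,f)\in F^1\times F^1$, this means $s(e_{i_{j+1}})\in r(e_{i_j})$ for $1\le j\le k-1$ and $s(e_{i_1})\in r(e_{i_k})$. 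Hence $\alpha:=e_{i_1}e_{i_2}\cdots e_{i_k}$ is a path in $\mathcal{G}$ with $s(\alpha)\in r(\alpha)$, i.e., a cycle in $\mathcal{G}$.

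For the reverse direction, again by contrapositive, assume $\mathcal{G}$ has a cycle $\alpha=\alpha_1\alpha_2\cdots\alpha_m$ with $s(\alpha_{j+1})\in r(\alpha_j)$ for $1\le j\le m-1$ and $s(\alpha_1)\in r(\alpha_m)$. Set $F:=\{\alpha_1,\ldots,\alpha_m\}\subseteq\mathcal{G}^1$, which is non-empty and finite, so $F^1=F$ and $F^0=\emptyset$. The conditions on the $s(\alpha_{j+1})\in r(\alpha_j)$ exhibit the pairs $(\alpha_j,\alpha_{j+1})$ (indices mod $m$) as edges of $G_F^1$ of the first type, and these edges form a cycle in $G_F$.

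I don't expect any real obstacle; the only point that requires a moment of attention is checking that the third relation of the definition of $G_F^1$ is irrelevant to the argument because any edge of $G_F$ incident to a vertex in $\Gamma_F$ terminates there, and likewise edges into $F^0$ terminate at a sink, so neither $\Gamma_F$-vertices nor $F^0$-vertices can appear inside a cycle of $G_F$. The argument is then essentially symbolic bookkeeping.
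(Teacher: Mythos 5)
Your proposal is correct and follows essentially the same route as the paper: both directions rest on the observation that every edge of $G_F$ has source in $F^1$, so any cycle of $G_F$ consists only of edges of type $(e,f)\in F^1\times F^1$ and translates directly into a cycle of $\mathcal{G}$, and conversely. Your explicit remark that the vertices in $F^0\cup\Gamma_F$ are sinks in $G_F$ is a slightly cleaner justification of the step the paper states without elaboration.
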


\begin{proof}
($\Longrightarrow$) Assume that $\mathcal{G}$ is an acyclic ultragraph and $F$ is a finite subset of $\mathcal{G}^1\cup \rm{Sing}(\mathcal{G})$. We claim that the graph $G_F$ is acyclic. Indeed, suppose $G_F$ is not acyclic, that means, it
has a cycle $c=c_1c_2\cdots c_m$, where $c_i\in G_F^1$ for all $i$. By the definition of $G_F$, we must have that $c_i \in \{(e,f)\in F^1\times F^1: s(f)\in r(e)\}$ for all $i$. So, without loss of generality, we may assume $c_i = (e_i, f_i)\in F^1\times F^1$ with $s(f_i)\in r(e_i)$. Then, since $c= c_1c_2\cdots c_m$ is a cycle in $G_F$, we have that $e_1f_1e_2f_2\hdots e_mf_m$ is a cycle in $\mathcal{G}$, a contradiction, and hence $G_F$ is acyclic.
	
($\Longleftarrow$) Assume that $\mathcal{G}$ has a cycle $\alpha=e_1e_2\cdots e_n$, where $e_i\in \mathcal{G}^1$ for all $i$. Let $F:= \{e_1, e_2,\hdots, e_n\} \subset \mathcal{G}^1$. Since $\alpha$ is a cycle in $\mathcal{G}$, $s(e_{i+1})\in r(e_i)$ for all $1\leq i\leq n-1$, and $s(e_1)\in r(e_n)$. This implies that $(e_i,e_{i+1})\in G_F^1$ for all $1\leq i\leq n-1$  and $(e_n,e_1)\in G_F^1$, and $(e_1,e_2) (e_2,e_3)\cdots (e_n,e_1)$ is a cycle in the garph $G_F$, thus finishing our proof.
\end{proof}

A (not necessarily unital) ring $R$ is called \textit{von Neumann regular} in case for every $r\in R$ there exists $s\in R$ such that $r = rsr$. A \textit{matricial $K$-algebra} is a finite direct sum of full finite dimensional matrix algebras over the field $K$. A \textit{locally matricial $K$-algebra} is a direct limit of matricial $K$-algebras (with not necessarily-unital transition homomorphisms). In \cite[Theorem 1]{ar:rcfalpa} Abrams and Rangaswamy showed that the graph Leavitt path algebras arising from acyclic graphs are precisely the von Neumann regular Leavitt path algebras, and in this case they are exactly locally matricial algebras. The following theorem extends this result to ultragraph Leavitt path algebras.

\begin{thm}\label{regularity}
Let $\mathcal{G}$ be an ultragraph and $K$ a field. Then the following conditions are equivalent:

$(1)$ $L_K(\mathcal{G})$ is von Neumann regular;

$(2)$ $\mathcal{G}$	is acyclic;

$(3)$ $L_K(\mathcal{G})$ is a locally matricial $K$-algebra, that is, $L_K(\mathcal{G})$ is a union of a chain of matricial $K$-subalgebras.
\end{thm}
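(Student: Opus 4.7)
The plan is to prove the cycle $(3)\Rightarrow(1)\Rightarrow(2)\Rightarrow(3)$, adapting the Abrams--Rangaswamy argument for graphs by using the finite graphs $G_F$ constructed just before Lemma~\ref{acyclic} as a bridge from ultragraphs to graphs. Direction $(3)\Rightarrow(1)$ is routine: every $M_n(K)$ is von Neumann regular, and the property passes to finite direct sums and to directed unions of subalgebras (any element lies in one member of the chain, which supplies the quasi-inverse that also works in the ambient algebra).

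For $(1)\Rightarrow(2)$ I argue contrapositively. If $\mathcal{G}$ has a cycle $\alpha=\alpha_1\cdots\alpha_n$, set $F=\{\alpha_1,\ldots,\alpha_n\}$; the proof of Lemma~\ref{acyclic} shows that $G_F$ inherits a cycle, so by \cite[Theorem~1]{ar:rcfalpa} the algebra $L_K(G_F)$ is not von Neumann regular. To transfer this to $L_K(\mathcal{G})$, I would identify $L_K(G_F)$ with the corner $e_FL_K(\mathcal{G})e_F$ for the idempotent
\[ e_F=\sum_{e\in F^1}s_es_e^*+\sum_{v\in F^0}p_v+\sum_{\omega\in\Gamma_F}p_{R(\omega)}\in L_K(\mathcal{G}), \]
so that non-regularity of the corner forces non-regularity of $L_K(\mathcal{G})$.

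For the substantive direction $(2)\Rightarrow(3)$ I take an increasing exhaustion $F_1\subseteq F_2\subseteq\cdots$ of $\mathcal{G}^1\cup\mathrm{Sing}(\mathcal{G})$ by finite subsets. By Lemma~\ref{acyclic} each $G_{F_n}$ is a finite acyclic graph, so $L_K(G_{F_n})$ is matricial by the classical structure theorem for acyclic graph Leavitt path algebras. The main task is to construct compatible $K$-algebra embeddings $\iota_n\colon L_K(G_{F_n})\hookrightarrow L_K(\mathcal{G})$ via the universal property: vertex idempotents of $G_{F_n}$ map to $s_es_e^*$ (for $e\in F_n^1$), $p_v$ (for $v\in F_n^0$), or $p_{R(\omega)}$ (for $\omega\in\Gamma_{F_n}$), and each edge of $G_{F_n}$ must map to a partial isometry chosen so that the $\iota_n(s_c)\iota_n(s_c)^*$ are mutually orthogonal. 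Once the $\iota_n$ are in place, Lemmas~\ref{g^0} and~\ref{generators} imply that every element of $L_K(\mathcal{G})$ lies in some $\iota_n(L_K(G_{F_n}))$, so $L_K(\mathcal{G})=\bigcup_n\iota_n(L_K(G_{F_n}))$ is locally matricial.

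The principal obstacle is the construction and verification of $\iota_n$. Under $\iota_n$ the Cuntz--Krieger relation at a regular vertex $e\in F^1$ of $G_F$ must translate into a decomposition of $s_es_e^*=s_ep_{r(e)}s_e^*$ as a sum of orthogonal idempotents indexed by the outgoing edges of $e$ in $G_F$. The key computational input is to split $p_{r(e)}$ along the partition $r(e)=\bigsqcup_{\omega_e=1}r(\omega)$ (valid by the disjointness noted before Lemma~\ref{acyclic}), decompose each $r(\omega)$ as $(r(\omega)\cap F^0)\sqcup R(\omega)$, and absorb the $\Gamma_0$-type $R(\omega)$'s into sums $\sum s_fs_f^*$ via the CK relations already present in $L_K(\mathcal{G})$; the images of the $(e,v)$ and $(e,\omega)$ edges then need a correction subtracting the relevant $s_fs_f^*$ contributions to maintain orthogonality. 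Injectivity of each $\iota_n$ follows from the graded Cuntz--Krieger uniqueness theorem for the acyclic graph $G_{F_n}$ together with Lemma~\ref{graded}(1).
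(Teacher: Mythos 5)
Your directions $(3)\Rightarrow(1)$ and $(2)\Rightarrow(3)$ are essentially the paper's: it too exhausts $\mathcal{G}^1\cup\mathrm{Sing}(\mathcal{G})$ by finite sets $F_n$, applies Lemma~\ref{acyclic} and the finite-dimensional structure theorem to the acyclic graphs $G_{F_n}$, and realizes the matricial pieces as the subalgebras $B_n$ generated by $\{s_{e},s_{e}^*,p_{v}\mid e\in F_n^1,\ v\in F_n^0\}$; the only difference is that the paper simply cites \cite[Lemma~2.13]{ima:tlpaou} for $B_n\cong L_K(G_{F_n})$, whereas you propose to rebuild that isomorphism by hand. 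Your sketch of $\iota_n$ is a re-proof of that lemma and would be acceptable if carried out (note the images must be exactly the $B_n$ so that the nesting $\iota_n(L_K(G_{F_n}))\subseteq\iota_{n+1}(L_K(G_{F_{n+1}}))$ and the exhaustion via Lemma~\ref{generators} come for free).

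The genuine gap is in $(1)\Rightarrow(2)$. Von Neumann regularity does not pass to subalgebras (consider $K[x]\subseteq K(x)$), so to pull non-regularity of $L_K(G_F)$ back to $L_K(\mathcal{G})$ you genuinely need $L_K(G_F)$ to be a \emph{corner} $e_FL_K(\mathcal{G})e_F$, and that is precisely what is not available: \cite[Lemma~2.13]{ima:tlpaou} identifies $L_K(G_F)$ only with the subalgebra $B_F$. The element $e_F$ you write down is not even an idempotent in general: if $e\in F^1$ has regular source $s(e)$ lying in $R(\omega)$ for some $\omega\in\Gamma_F$ (which is easily arranged, since $s(e)\notin\mathrm{Sing}(\mathcal{G})\supseteq F^0$), then $p_{R(\omega)}s_es_e^*=p_{R(\omega)\cap\{s(e)\}}s_es_e^*=s_es_e^*\neq 0$, so the summands are not orthogonal. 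Even after correcting $e_F$, the corner it cuts out is strictly larger than $B_F$ --- it contains $s_fs_g^*$ for arbitrary edges $f,g\notin F^1$ emitted from vertices of the sets $R(\omega)$ --- so there is no reason for it to be isomorphic to $L_K(G_F)$, and non-regularity of $L_K(G_F)$ then tells you nothing. The paper avoids this entirely by running the Abrams--Rangaswamy computation directly inside $L_K(\mathcal{G})$: assuming $p_v-s_cp_v=(p_v-s_cp_v)\beta(p_v-s_cp_v)$ with $\beta=p_v\beta p_v$, it uses the $\mathbb{Z}$-grading of Lemma~\ref{graded} to force $\beta_{ts}=(s_cp_v)^t$ for all $t$, hence $\beta$ commutes with $s_cp_v$, and then compares top degrees in $p_v-s_cp_v=\beta(p_v-s_cp_v)^2$ to reach a contradiction. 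You should replace your reduction to $G_F$ by this direct degree argument (or otherwise exhibit an honest corner of $L_K(\mathcal{G})$ containing the obstruction).
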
 
\begin{proof}
(1)$\Longrightarrow $(2). The proof is essentially based on the ideas in the proof of (2)$\Longrightarrow$(3) in \cite[Theorem 1]{ar:rcfalpa}, by using Lemma~\ref{graded}.

Assume that $L_K(\mathcal{G})$ is von Neumann regular, and there exists a cycle $c$ in $\mathcal{G}$; denote $s(c)$ by $v$. Since $L_K(\mathcal{G})$ is von Neumann regular, there exists an element $\beta\in L_K(\mathcal{G})$ such that $p_v - s_cp_v = (p_v - s_cp_v)\beta (p_v - s_cp_v)$. Replacing $\beta$ by $p_v\beta p_v$ if necessary, there is no loss of generality in assuming that $\beta = p_v\beta p_v$. By Lemma~\ref{graded}, we may write $\beta$ as a sum of homogeneous elements $\beta = \sum ^n_{i=m}\beta_i$, where $m, n\in \mathbb{Z}$, $\beta_m \neq 0$, $\beta_n \neq 0$, and $\rm{deg}$$(\beta_i) =i$ for all nonzero $\beta_i$ with $m \leq i\leq n$. Since $\rm{deg}$$(p_v) = 0$, we have $p_v\beta_i p_v = \beta_i$ for all $i$. Then \[p_v - s_cp_v = (p_v - s_cp_v)(\sum ^n_{i=m}\beta_i) (p_v - s_cp_v).\] Equating the lowest degree terms on both sides, we obtain that $\beta_m = p_v$. Since $\rm{deg}$$(p_v) = 0$, we must have that $m=0$ and $\beta_0 = p_v$. Thus $\beta = \sum^n_{i=0}\beta_i$. Let $\rm{deg}$$(s_c) = s >0$. By again equating terms of like degree in the displayed equation, we see that $\beta_i = 0$ whenever $i$ is nonzero and not a multiple of $s$, so that \[\sum ^n_{i=m}\beta_i = p_v + \sum_{t=1}^{k}\beta_{ts}.\] We then have $$p_v -s_cp_v = (p_v - s_cp_v)p_v(p_v - s_cp_v) + (p_v - s_cp_v)(\sum_{t=1}^{k}\beta_{ts}) (p_v - s_cp_v),$$ which shows that $0 = -p_v + (s_cp_v)^2 + (p_v - s_cp_v)(\sum_{t=1}^{k}\beta_{ts}) (p_v - s_cp_v)$. By equating the degree $s$ components on both sides we obtain $\beta_s = s_cp_v$. Similarly, by equating the degree $2s$
components, we obtain $0 = (s_cp_v)^2 - (s_cp_v)\beta_s - \beta_s(s_cp_v) + \beta_{2s}$, so $\beta_{2s} = (s_cp_v)^2$, and continuing in this manner we get $\beta_{ts} = (s_cp_v)^t$ for all $t$. In particular, we conclude that every homogeneous component $\beta_i$ of $\beta$ commutes with $s_cp_v$ in $L_K(\mathcal{G})$. This yields that $(s_cp_v)\beta = \beta (s_cp_v)$. But then the equation $p_v - s_cp_v = (p_v - s_cp_v)\beta (p_v - s_cp_v)$ becomes
\[p_v - s_cp_v = \beta (p_v - s_cp_v)^2.\]

But this is not possible, as follows. Let $i$ be maximal with the property that $\beta_i ((p_v - s_cp_v)^2)\neq 0$ (Such $i$ exists, since $\beta_0 = p_v$ has this property.) Then the right hand side contains terms of degree $2s +i$ (namely, $\beta_i (s_cp_v)^2$), while the maximum degree of terms
on the left hand side is $s$.

(2)$\Longrightarrow$(3). Since $G^0$ and $\mathcal{G}^1$ are countable sets, we may rename the edges of $\mathcal{G}^1$ as
a sequence $\{e_i\}^{\infty}_{i=1}$ and rename the vertices of $\text{Sing}(\mathcal{G})$ a sequence $\{v_i\}^{\infty}_{i=1}$.
For $n\in \mathbb{N}$, we denote by $B_n$ the subalgebra of $L_K(\mathcal{G})$ generated by $\{s_{e_i}, s^*_{e_i}, p_{v_i}\mid i = 1, \hdots, n\}$. We then have that $B_n \subseteq B_{n+1}$ for all $n$, and by Lemma~\ref{generators}, $L_K(\mathcal{G}) = \bigcup_{n=1}^{\infty}B_n$. By \cite[Lemma 2.13]{ima:tlpaou}, we have that $B_n \cong L_K(G_{F_n})$, where 
$F_n :=\{e_1,\hdots,e_n\}\cup \{v_1,\hdots,v_n\}$. Since $\mathcal{G}$ is acyclic, and by Lemma~\ref{acyclic}, the finite graph $G_{F_n}$ is acyclic for all $n$. By \cite[Proposition 3.5]{ap:fdlpa}, $L_K(G_{F_n})$ is a matricial $K$-algebra. Therefore, $L_K(\mathcal{G})$ is a locally matricial $K$-algebra.

(3)$\Longrightarrow $(1).  It is well-known that every matricial $K$-algebra is a von Neumann regular ring, and hence easily so too is any direct union of such algebras, thus finishing our proof.
\end{proof}

\section{Purely infinite simplicity}

The main goal of this section is both to give a graph-theoretic criterion for purely infinite simple ultragraph Leavitt path algebras (Theorem~\ref{purelysimple}) and provide a complete description of graded simple ultragraph Leavitt path algebras (Theorem~\ref{TrichotomyPrinciple}).

Recall (see e.g. \cite{agp:kopisrr}) that an idempotent $e$ in a ring $R$ is called \textit{infinite} if $eR$ is isomorphic as a right $R$-module to a proper direct summand of itself. $R$ is called \textit{purely infinite} in case every nonzero right ideal of $R$ contains an infinite idempotent. The following lemma provides us with a useful criterion for purely infinite simple rings with local units.

\begin{lem}[{\cite[Proposition 10]{ap:pislpa06}}]\label{pisimple}
For any ring with local units $R$, the following conditions are equivalent:	

$(1)$ $R$ is purely infinite simple;

$(2)$ $R$ is not a division ring, and $R$ has the property that for every pair of nonzero elements $\alpha, \beta\in R$ there exist elements $a, b\in R$ such that $a\alpha b = \beta$. 
\end{lem}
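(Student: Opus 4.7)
The plan is to prove the two implications (1) $\Longrightarrow$ (2) and (2) $\Longrightarrow$ (1) separately, exploiting local units throughout to reduce to unital corners $eRe$ whenever convenient.

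For the forward direction, I would start by observing that $R$ cannot be a division ring: in a division ring the only right ideals are $0$ and $R$ itself, and $R$ admits no proper right direct summand (its only idempotents being $0$ and $1$), so there can be no infinite idempotent. To prove the divisibility property, fix nonzero $\alpha, \beta \in R$. Purely infinite simplicity implies simplicity, so $\beta$ lies in the two-sided ideal generated by $\alpha$, and hence $\beta = \sum_{i=1}^{n} x_i \alpha y_i$ for some $x_i, y_i \in R$. Choose a local unit $e$ absorbing $\beta$ together with all of the $x_i, y_i$, and pass to the unital corner $eRe$, which inherits purely infinite simplicity. The task then reduces to a compression: given a sum of $n$ products involving $\alpha$, produce a single pair $(a,b)$ with $a\alpha b = \beta$. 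I would do this by using pure infiniteness of $eRe$ to find $n$ mutually orthogonal subidempotents of $e$, each Murray--von Neumann equivalent to $e$, and use the resulting partial isometries to dilate the $n$ summands into a single product.

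For the converse, simplicity is immediate: any nonzero two-sided ideal $I$ contains a nonzero $\alpha$, and by hypothesis every $\beta \in R$ equals $a\alpha b \in I$, forcing $I = R$. To establish pure infiniteness, take a nonzero right ideal $J$ and a nonzero element $\alpha \in J$. Since $R$ is not a division ring, some local unit $e$ fails to act invertibly on every nonzero element of $eRe$, producing nonzero elements inside $eRe$ without an inverse; the divisibility property then allows one to construct an idempotent $f \in \alpha R \subseteq J$ together with a strict subidempotent equivalent to $f$ itself, exhibiting $f$ as infinite.

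The main obstacle is the compression step in the forward direction: converting the finite sum $\beta = \sum x_i \alpha y_i$ into a single product $a\alpha b$ requires the full strength of pure infiniteness together with a careful manipulation of partial isometries. This is where the argument leans on the monoid-theoretic framework of purely infinite simple rings developed in \cite{agp:kopisrr}, and the local-units version essentially parallels the unital case after reduction to the corner $eRe$.
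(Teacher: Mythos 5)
The paper itself offers no proof of this lemma: it is imported wholesale as a citation of \cite[Proposition~10]{ap:pislpa06} (which in turn rests on \cite[Theorem~1.6]{agp:kopisrr} in the unital case), so there is no internal argument to measure yours against. Judged on its own terms, your sketch has a genuine gap at precisely the point you flag as ``the main obstacle.'' Starting from $\beta=\sum_{i=1}^{n}x_i\alpha y_i$ and choosing orthogonal idempotents $g_1,\dots,g_n\le e$ with $g_i\sim e$, say via $u_i\in eRg_i$ and $v_i\in g_iRe$ with $u_iv_i=e$ and $v_iu_i=g_i$, the dilation identity you are implicitly invoking is
\[
\Bigl(\sum_i x_iu_i\Bigr)\Bigl(\sum_k v_k\alpha u_k\Bigr)\Bigl(\sum_j v_jy_j\Bigr)=\sum_i x_i\alpha y_i=\beta,
\]
which does express $\beta$ as a single product $amb$ --- but with $m=\sum_k v_k\alpha u_k$, which is again a sum of $n$ terms each containing $\alpha$, not $\alpha$ itself. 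Because $\alpha$ sits in the middle of the product, the orthogonality $u_iv_k=\delta_{ik}e$ can only kill the cross terms if the partial isometries are inserted on \emph{both} sides of $\alpha$, and no choice of outer factors $a,b$ alone recovers $\alpha$ there. The same obstruction appears even in the $C^*$-setting, so this compression cannot be repaired by ``careful manipulation of partial isometries''; it is the wrong mechanism.

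The argument that actually works never compresses the sum. One shows instead that for every nonzero $\alpha$ and every local unit $e$ there are $a,b$ with $a\alpha b=e$: pure infiniteness puts an infinite idempotent $f$ inside the right ideal $\alpha R$, say $f=\alpha r$; simplicity together with pure infiniteness yields the comparison $e\precsim f$, i.e.\ an idempotent $f'\le f$ and elements $\sigma\in Rf'$, $\tau\in f'R$ with $\sigma\tau=e$ and $\tau\sigma=f'$; then $(\sigma f')\,\alpha\,(rf'\tau)=\sigma f'ff'\tau=\sigma\tau=e$. Choosing $e$ with $e\beta=\beta$ gives $\beta=(a\alpha b)\beta=a\,\alpha\,(b\beta)$, which is the desired single product. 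Your treatment of the converse is also thinner than it should be (the passage from ``$R$ is not a division ring'' to an idempotent $g\in\alpha R$ admitting a proper equivalent subidempotent needs the explicit construction $g=\alpha ba$ from a relation $a\alpha b=e$, followed by producing $x,y\in eRe$ with $xy=e$ but $yx\ne e$ from a non-invertible element), but that direction is at least pointed the right way; the forward direction as written does not close.
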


We will use the above lemma to characterize the purely infinite simplicity of ultragraph Leavitt path algebras. Before doing so, we recall some notions and facts introduced by Tomforde in \cite{tomf:soua}. Let $\mathcal{G}$ be an ultragraph. A subset $\mathcal{H}\subseteq\mathcal{G}^0$ is called \textit{hereditary} if the following conditions are satisfied:
\begin{itemize} 
\item[(1)] whenever $e$ is an edge with $\{s(e)\} \in\mathcal{H}$, then $r(e)\in \mathcal{H}$;
	
\item[(2)] $A\cup B \in \mathcal{H}$ for all $A, B \in \mathcal{H}$;

\item[(3)] $A\in \mathcal{H}, B\in \mathcal{G}^0$ and $B\subseteq A$, imply that $B\in\mathcal{H}$.
\end{itemize}	
A subset $\mathcal{H}\subseteq\mathcal{G}^0$ is called \textit{saturated} if for any $v\in G^0$ with $0<|s^{-1}(v)|<\infty$, we have that
\begin{center}
$\left\{r(e)\mid e\in \mathcal{G}^1 \text{ and } s(e)=v\right\}\subseteq \mathcal{H}$ implies $\left\{v\right\}\in\mathcal{H}$.	
\end{center} 
Note that $\varnothing$ and $\mathcal{G}^0$ are two saturated hereditary subsets of $\mathcal{G}^0$. We denote by $\mathcal{H}_{\mathcal{G}}$ the set of all saturated hereditary subsets of $\mathcal{G}^0$. For $\mathcal{H} \subseteq \mathcal{G}^0$, we denote by
$\overline{\mathcal{H}}$ the smallest saturated hereditary subset of $\mathcal{G}^0$ containing $\mathcal{H}$. In \cite[Lemma 3.12]{tomf:soua} Tomforde gave a useful description of $\overline{\mathcal{H}}$ as follows.

\begin{lem}[{\cite[Lemma 3.12]{tomf:soua}}] \label{hereditarysaturationsmallest}
Let $\mathcal{G}:= (G^0, \mathcal{G}^1, r, s)$ be an ultragraph and let $\mathcal{H} \subseteq \mathcal{G}^0$ be a hereditary subset. Set $\mathcal{H}_0:= \mathcal{H}$ and for $n\in \mathbb{N}$ define
\begin{center} 
		$\mathcal{H}_{n+1}:= \{A\cup F\mid A\in \mathcal{H}_n$ and $F \text{ is a finite subset of } S_n\}$
\end{center}
where $S_n:= \{w\in G^0\mid 0<|s^{-1}(w)|<\infty$ and $\{r(e)\mid s(e)=w\}\subseteq \mathcal{H}_n\rbrace$. Then $\overline{\mathcal{H}}=\bigcup\limits_{i=0}^{\infty}\mathcal{H}_i$ and every $X\in \overline{\mathcal{H}}$ has the form $X=A\cup F$ for some $A\in \mathcal{H}$ and
for some finite set $F\subseteq \bigcup_{i=1}^{\infty}S_i$.
\end{lem}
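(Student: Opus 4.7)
The plan is to set $\widehat{\mathcal{H}} := \bigcup_{i\ge 0}\mathcal{H}_i$ and prove $\widehat{\mathcal{H}}=\overline{\mathcal{H}}$ by showing (i) $\widehat{\mathcal{H}}$ is a saturated hereditary subset of $\mathcal{G}^0$ containing $\mathcal{H}$, and (ii) any saturated hereditary $\mathcal{K}\subseteq\mathcal{G}^0$ with $\mathcal{H}\subseteq\mathcal{K}$ satisfies $\widehat{\mathcal{H}}\subseteq\mathcal{K}$; the normal form for elements of $\overline{\mathcal{H}}$ will then fall out by unwinding the recursion defining the $\mathcal{H}_n$.

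For (i), the containment $\mathcal{H}\subseteq\widehat{\mathcal{H}}$ is immediate since $\mathcal{H}_0=\mathcal{H}$. Choosing $F=\varnothing$ in the definition of $\mathcal{H}_{n+1}$ shows $\mathcal{H}_n\subseteq\mathcal{H}_{n+1}$, so any finite collection of elements of $\widehat{\mathcal{H}}$ sits at a common level, which together with property (2) of hereditariness at that level gives property (2) for $\widehat{\mathcal{H}}$; saturation follows similarly, since for a regular $v$ with $\{r(e):s(e)=v\}\subseteq\widehat{\mathcal{H}}$ finiteness of $s^{-1}(v)$ lets me place all these range sets at a common level $\mathcal{H}_n$, whence $v\in S_n$ and $\{v\}\in\mathcal{H}_{n+1}$. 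Properties (1) and (3) I verify by induction on $n$. For (1): if $\{s(e)\}\in\mathcal{H}_{n+1}$, write $\{s(e)\}=A\cup F$ with $A\in\mathcal{H}_n$ and $F\subseteq S_n$ finite; the singleton on the left forces either $A=\{s(e)\}$ (apply the inductive hypothesis at level $n$) or $A=\varnothing$ with $s(e)\in S_n$, and in the latter case the definition of $S_n$ directly yields $r(e)\in\mathcal{H}_n$. For (3): given $A\cup F\in\mathcal{H}_{n+1}$ and $B\in\mathcal{G}^0$ with $B\subseteq A\cup F$, decompose $B=(B\cap A)\cup(B\cap F)$; since $A\in\mathcal{G}^0$, Lemma~\ref{g^0} gives $B\cap A\in\mathcal{G}^0$ and the inductive hypothesis gives $B\cap A\in\mathcal{H}_n$, while $B\cap F$ is still a finite subset of $S_n$, so $B\in\mathcal{H}_{n+1}$.

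For (ii), I induct on $n$ to show $\mathcal{H}_n\subseteq\mathcal{K}$. The base $n=0$ is given. For the step, take $A\cup F\in\mathcal{H}_{n+1}$ with $A\in\mathcal{H}_n$ and $F=\{w_1,\ldots,w_k\}\subseteq S_n$. By the inductive hypothesis $A\in\mathcal{K}$; each $w_j$ is regular with $\{r(e):s(e)=w_j\}\subseteq\mathcal{H}_n\subseteq\mathcal{K}$, so saturation of $\mathcal{K}$ gives $\{w_j\}\in\mathcal{K}$, and finitely many applications of property (2) for $\mathcal{K}$ give $F\in\mathcal{K}$ and $A\cup F\in\mathcal{K}$. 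Combining (i) and (ii), $\widehat{\mathcal{H}}=\overline{\mathcal{H}}$. The shape $X=A\cup F$ for $X\in\overline{\mathcal{H}}$ then follows by unwinding: an element of $\mathcal{H}_n$ is $A_{n-1}\cup F_{n-1}$ with $A_{n-1}\in\mathcal{H}_{n-1}$ and $F_{n-1}\subseteq S_{n-1}$, and iterating collapses to $A_0\cup F_0\cup F_1\cup\cdots\cup F_{n-1}$ with $A_0\in\mathcal{H}$ and each $F_i\subseteq S_i$; setting $F:=\bigcup_i F_i$, a finite subset of $\bigcup_{i\ge 0} S_i$, finishes the claim.

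The main technical nuisance I anticipate is property (3) at the inductive step: one must keep track of which subsets of $A\cup F$ are actually in $\mathcal{G}^0$, because the $F$-part consists of singular vertices whose singletons need not otherwise appear in $\mathcal{H}$, and the decomposition $B=(B\cap A)\cup(B\cap F)$ succeeds only because finite subsets of $G^0$ are automatically in $\mathcal{G}^0$ and $\mathcal{G}^0$ is closed under finite intersection. Once this bookkeeping is in place the induction runs cleanly and the remaining verifications are essentially formal.
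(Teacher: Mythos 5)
The paper does not actually prove this lemma; it is imported verbatim from Tomforde (\cite[Lemma 3.12]{tomf:soua}), so there is no internal proof to compare against. Your argument is correct and is essentially the standard one: show that $\bigcup_{i}\mathcal{H}_i$ is a saturated hereditary collection containing $\mathcal{H}$, show it is contained in every saturated hereditary collection containing $\mathcal{H}$, and read off the normal form by unwinding the recursion. Two small points deserve an explicit line each. First, your union-closure step tacitly uses that each level $\mathcal{H}_n$ is itself closed under finite unions; this is a one-line induction, since $(A\cup F)\cup(A'\cup F')=(A\cup A')\cup(F\cup F')$ with $A\cup A'\in\mathcal{H}_n$ and $F\cup F'$ a finite subset of $S_n$, but it should be stated. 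Second, both the saturation step (writing $\{v\}=\varnothing\cup\{v\}$ with $v\in S_n$) and the case $A=\varnothing$ in your verification of hereditariness condition (1) require $\varnothing\in\mathcal{H}_n$; this holds because $\varnothing\in\mathcal{G}^0$ (the paper's convention $p_{_\varnothing}=0$ in Definition~\ref{utraLevittpathalg} presupposes this, and it also follows from closure of $\mathcal{G}^0$ under finite intersections) and a nonempty hereditary collection contains $\varnothing$ by its condition (3), while in the degenerate case $\mathcal{H}=\varnothing$ every $S_n$ and $\mathcal{H}_n$ is empty and the lemma is trivial. The citation of Lemma~\ref{g^0} for $B\cap A\in\mathcal{G}^0$ is unnecessary (closure under finite intersections is part of the definition of $\mathcal{G}^0$), but harmless. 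With these remarks inserted the proof is complete.
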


Following \cite{tomf:soua}, if $\mathcal{G}$ is an ultragraph and $v,w\in \mathcal{G}^0$, we write $w\ge v$ to mean that there exits a path $\alpha \in \mathcal{G}^*$ with $s(\alpha)=w$ and $v\in r(\alpha)$. Also, we write $G^0\ge \{v\}$ to mean that $w\ge v$ for all $w\in G^0$. 
We say that a vertex $w$ \textit{connects to a cycle} $\alpha = e_1\cdots e_n$ if there exists a path $\beta\in \mathcal{G}^*$
with $s(\beta) = w$ and $s(e_i) \in r(\beta)$ for some $1 \leq i \leq n$. Note that if $w$ is a sink on a cycle (i.e. $w \in r(e_i)$ for some $i$), then $w$ does not connect to a cycle.
We say that a vertex $w$ \textit{connects to an infinite path} $\alpha = e_1\cdots e_n\cdots$ if there exists an $i\in \mathbb{N}$ such that $v\ge s(e_i)$.

If $v\in G^0$ and $A\subseteq G^0$, then we write $v \rightarrow A$ to mean that there exist a finite number of paths $\alpha_1,\hdots,\alpha_n \in \mathcal{G}^*$ such that $s(\alpha_i)=v$ for all $1\le i \le n$ and $A\subseteq \cup_{i=1}^nr(\alpha_{i})$. Note that if $A=\{w\}$, then $v \rightarrow \{w\}$ if and only if $v\ge w$.

The following lemma provides us with a criterion for ultragraphs $\mathcal{G}$ with $\mathcal{H}_{\mathcal{G}}= \{\varnothing, \mathcal{G}^0\}$. The proof given in the one of \cite[Theorem 3.11]{tomf:soua} applies; and just for the reader's convenience, we reproduce it here.

\begin{lem}\label{gradedsimpleulgraph}
For every ultragraph $\mathcal{G}$, $\mathcal{H}_{\mathcal{G}}= \{\varnothing, \mathcal{G}^0\}$ if and only if the following conditions are satisfied:
	
$(1)$ Every vertex connects to every infinite path.
	
$(2)$ $G^0\ge \{v\}$ for every singular vertex $v\in G^0$.
	
$(3)$ If $e\in \mathcal{G}^1$ is an edge for which the set $r(e)$ is infinite, then for every $w\in G^0$ there exists a set $A_w\subseteq r(e)$ for which $r(e)\setminus A_w$ is finite and $w \rightarrow A_w$.
\end{lem}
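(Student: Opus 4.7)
My plan is to prove both directions by leveraging the saturated-hereditary closure $\overline{\{\{w\}\}}$ of a single vertex together with the stratification $\overline{\mathcal{H}}=\bigcup_n\mathcal{H}_n$ of Lemma~\ref{hereditarysaturationsmallest}. An auxiliary observation I would verify at the outset is that the hereditary closure of $\{\{w\}\}$ in $\mathcal{G}^0$ coincides with $\{A\in\mathcal{G}^0:w\to A\}$: this follows by iterating the hereditary axiom $\{s(e)\}\in\mathcal{H}\Rightarrow r(e)\in\mathcal{H}$ along paths from $w$, then closing under finite unions and subsets.

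For the forward direction ($\Rightarrow$), I would argue contrapositively: each failure of (1), (2), or (3) yields a choice of $w$ for which $\overline{\{\{w\}\}}$ is a nontrivial element of $\mathcal{H}_{\mathcal{G}}$. Given a failing infinite path $\alpha=e_1e_2\cdots$ for (1), I descend on the minimal stage $n_i$ with $\{s(e_i)\}\in\mathcal{H}_{n_i}$: decomposing $\{s(e_1)\}=A\cup F$ with $A\in\mathcal{H}_{n-1}$ and $F\subseteq S_{n-1}$ finite forces $F=\{s(e_1)\}$, hence $r(e_1)\in\mathcal{H}_{n-1}$, which pushes $\{s(e_2)\}$ into $\mathcal{H}_{n-1}$ by hereditary; iterating lands some $\{s(e_j)\}$ in $\mathcal{H}_0$, forcing $w\ge s(e_j)$ and contradicting the hypothesis. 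For a failing singular $v$ in (2), the same stratification shows $\{v\}\notin\mathcal{H}_0$ (by hypothesis) and $\{v\}\notin\mathcal{H}_n$ for $n\ge 1$ (since $v\in S_{n-1}$ would make $v$ regular). For a failing infinite-range edge $e_0$ in (3), I would run an induction on $n$ showing no cofinite subset of $r(e_0)$ lies in $\mathcal{H}_n$: the base is the $w\to\cdot$ characterization of $\mathcal{H}_0$, and in the step a decomposition $B=A\cup F$ with $B$ cofinite forces $A\in\mathcal{H}_{n-1}$ cofinite in $r(e_0)$ as well, yielding the inductive contradiction.

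For the backward direction ($\Leftarrow$), given nonempty $\mathcal{H}$, pick $v_0$ with $\{v_0\}\in\mathcal{H}$ (by hereditary on any nonempty $A_0\in\mathcal{H}$) and set $W:=\{v\in G^0:\{v\}\in\mathcal{H}\}$. The plan is to show $W=G^0$, whence $\mathcal{H}=\mathcal{G}^0$ follows from Lemma~\ref{g^0} together with the upgrade step below. Forward-closure of hereditary gives $D(v_0):=\{v:v_0\ge v\}\subseteq W$, and (2) places every singular vertex in $D(v_0)\subseteq W$. The crucial \emph{upgrade step} is $r(e)\subseteq W\Rightarrow r(e)\in\mathcal{H}$: immediate when $r(e)$ is finite; when $r(e)$ is infinite, I apply (3) to $v_0$ to obtain a cofinite $A_{v_0}\subseteq r(e)$ with $A_{v_0}\subseteq\bigcup_k r(\alpha_k)$ for finitely many paths $\alpha_k$ from $v_0$, then iterate hereditary along each $\alpha_k$ to see $r(\alpha_k)\in\mathcal{H}$, and combine with the finite $r(e)\setminus A_{v_0}\subseteq W$ to realize $r(e)$ as a subset of an element of $\mathcal{H}$. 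Combined with saturation, the upgrade gives a graph-saturation property of $W$: any regular $v$ with $r(e)\subseteq W$ for every $s(e)=v$ belongs to $W$.

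To conclude, I suppose toward a contradiction that $X:=G^0\setminus W\ne\varnothing$ and pick $w_0\in X$; since all singular vertices are in $W$, $w_0$ is regular, and the contrapositive of graph-saturation furnishes $e_1$ with $s(e_1)=w_0$ and $w_1\in r(e_1)\cap X$. Iterating yields an infinite path $e_1e_2\cdots$ all of whose source vertices lie in $X$, and (1) supplies $v_0\ge s(e_i)$ for some $i$, placing $s(e_i)\in D(v_0)\subseteq W$ and contradicting $s(e_i)\in X$. The main obstacle throughout is the infinite-range feature of ultragraphs: closure of $\mathcal{H}$ only under finite unions means $r(e)\subseteq W$ does not automatically give $r(e)\in\mathcal{H}$, and condition~(3) is precisely the combinatorial input needed to bridge this gap in both directions---via the stratification induction in the forward direction and via the upgrade step in the backward direction.
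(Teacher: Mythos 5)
Your proposal is correct, and it reaches the same destination by the same two pillars as the paper -- the stratification $\overline{\mathcal{H}}=\bigcup_n\mathcal{H}_n$ of Lemma~\ref{hereditarysaturationsmallest} for the forward direction, and the construction of an infinite path all of whose sources avoid $\mathcal{H}$, killed by condition (1), for the converse -- but the execution differs in two genuine ways. In the forward direction the paper builds a tailor-made (already saturated, in the case of condition (1)) hereditary set for each of the three conditions, e.g.\ $\{A: A\subseteq K\}$ with $K$ the set of vertices not connecting to the given infinite path, whereas you work uniformly with $\overline{\{A: w\to A\}}$ and exclude the relevant element ($\{s(e_1)\}$, $\{v\}$, or any cofinite subset of $r(e_0)$) by descent/induction through the stages $\mathcal{H}_n$; your descent for (1) is slightly more work than the paper's direct saturation check, but it is valid and makes the three cases pleasingly parallel. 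In the backward direction your reorganization around $W=\{v:\{v\}\in\mathcal{H}\}$ is more than cosmetic: the paper passes from ``$\mathcal{H}\ne\mathcal{G}^0$'' to ``there exists $\{w_1\}\notin\mathcal{H}$'' without comment, and that implication is exactly the contrapositive of your \emph{upgrade step} (every $r(e)\subseteq W$ lies in $\mathcal{H}$, using condition (3) at $v_0$ for infinite $r(e)$, followed by Lemma~\ref{g^0}). So your write-up explicitly supplies a justification that the published proof leaves implicit; this is where condition (3) earns its keep a second time, and it is worth keeping in any final version.
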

\begin{proof}
	
($\Longrightarrow$)	Let $\alpha=e_1e_2\cdots$ be an infinite path and set $K:=\{w\in G^0\mid w\ngeq s(e_i) \text{ for all } i\}$. Let $\mathcal{H}:= \left\{A\in \mathcal{G}^0\mid A\subseteq K\right\}$. We then claim that $\mathcal{H}$ is a saturated hereditary subset of $\mathcal{G}^0$. Indeed, let $e\in \mathcal{G}^1$ with $\{s(e)\}\in \mathcal{H}$ (i.e. $s(e)\in K$). If $r(e) \notin \mathcal{H}$, then $r(e)\nsubseteq K$, and so there exist $w\in r(e)$ and $i\in \mathbb{N}$ such that $w\ge s(e_i)$. This implies that $s(e)\ge w \ge s(e_i)$, that means, $s(e)\notin K$, a contradiction, and hence $r(e)\in \mathcal{H}$.
It is also obvious that $A\cup B\in \mathcal{H}$ for all $A, B\in \mathcal{H}$, and $B\in \mathcal{H}$ for all $B\subseteq A\in \mathcal{H}$. Therefore, $\mathcal{H}$ is a hereditary subset of $\mathcal{G}^0$.
	
Let $v\in G^0$ with $0<|s^{-1}(v)|<\infty$ and $\{r(e)\mid e\in \mathcal{G}^1 \text{ and } s(e)=v\}\subseteq \mathcal{H}$.
Assume that $\{v\} \notin \mathcal{H}$. Then, there exists $i\in \mathbb{N}$ such that $v\ge s(e_i)$, that means, there exists a path $p=f_1f_2\cdots f_m\in \mathcal{G}^*$ such that $s(p)=s(f_1)=v$ and $s(e_i)\in r(p)$. This implies that $s(f_2)\ge s(e_i)$, and so $s(f_2) \notin K$. On the other hand, since $r(f_1)\subseteq K$ and $s(f_2)\in r(f_1)$, we must have that $s(f_2)\in K$, a contradiction, which shows that  $\{v\} \in \mathcal{H}$. So $\mathcal{H}$ is a saturated subset of $\mathcal{G}^0$, thus proving the claim.
	
Since $\{s(e_1)\}\notin \mathcal{H}$, we have that $\mathcal{H} \ne \mathcal{G}^0$, and so $\mathcal{H}=\varnothing$, by our hypothesis that $\mathcal{H}_{\mathcal{G}}= \{\varnothing, \mathcal{G}^0\}$. So, every vertex $v\in G^0$ there exists an $i\in \mathbb{N}$ such that $v\ge s(e_i)$, showing item (1).
	
Let $v\in G^0$ be a singular vertex. Fix any vertex $w\in G^0$ and definite $K:= \left\{x\in G^0\mid w\ge x\right\}$. Let $\mathcal{H}:= \{A\in \mathcal{G}^0\mid A\subseteq K\}$. Similar to the above case, we obtain that $\mathcal{H}$ is a hereditary subset of $\mathcal{G}^0$. 
	
We denote by $\overline{\mathcal{H}}$ the smallest saturated hereditary subset of $\mathcal{G}^0$ containing $\mathcal{H}$. Since $\{w\}\in \mathcal{H}$, $\overline{\mathcal{H}} \ne \emptyset$, and so $\overline{\mathcal{H}}=\mathcal{G}^0$. Using the notation of Lemma~\ref{hereditarysaturationsmallest} and since $v$ is a singular vertex, we get that $v\notin S_i$ for all $i$. By Lemma~\ref{hereditarysaturationsmallest} and since  $\{v\}\in \overline{\mathcal{H}}$, we immediately get that $\left\{v\right\}\in \mathcal{H}$. This implies that $v\in K$ and $w\ge v$. Hence $G^0\ge \left\{v\right\}$, proving item (2).
	
Let $e\in \mathcal{G}^1$ be an edge such that $r(e)$ is an infinite set. Fix $w\in G^0$ and set $\mathcal{H}:= \left\{A\in \mathcal{G}^0\mid w\rightarrow A\right\}$. We claim that $\mathcal{H}$ is a hereditary subset of $\mathcal{G}^0$. Indeed, 
if $f$ is an edge with $\left\{s(f)\right\}\in \mathcal{H}$, then $w \rightarrow \{s(f)\}$, and hence $w\ge s(f)$. Thus, there exists a path $\beta\in \mathcal{G}^*$ such that $s(\beta)=w$ and $s(f)\in r(\beta)$. We then have that $w \rightarrow r(f)$ via the path $\beta f$. Thus, $r(f)\in \mathcal{H}$.
	
If $A, B \in \mathcal{H}$, then $w \rightarrow A$ and $w \rightarrow B$, and so there exist a finite number of paths $\alpha_1,\hdots,\alpha_n \in \mathcal{G}^*$  such that $s(\alpha_i)=w$ for all $1\le i \le n$ and $A\subseteq \bigcup_{i=1}^nr(\alpha_i)$, and  there exist a finite number of paths $\beta_1,\hdots,\beta_m \in \mathcal{G}^*$ such that $s(\beta_j)=w$ for all $1\le j \le m$ and $B\subseteq \bigcup_{j=1}^mr(\beta_j)$. It is obvious that $A\cup B\subseteq (\bigcup_{i=1}^nr(\alpha_i))\cup (\bigcup_{j=1}^mr(\beta_j))$, and so $w\rightarrow A\cup B$. Therefore, $A\cup B \in \mathcal{H}$.
	
If $A\in \mathcal{H}, B\in \mathcal{G}^0$, and $B\subseteq A$, then we have that $w \rightarrow A$, and so there exist a finite number of paths $\alpha_1,\hdots,\alpha_n \in \mathcal{G}^*$  such that $s(\alpha_i)=w$ for all $1\le i \le n$ and $A\subseteq \bigcup_{i=1}^nr(\alpha_i)$. This implies that $B\subseteq \bigcup_{i=1}^nr(\alpha_i)$, and hence $B\in \mathcal{H}$, thus proving the claim.
	
Since $\left\{w\right\} \in \mathcal{H}$, $\mathcal{H}$ is nonempty, and hence $\overline{\mathcal{H}}=\mathcal{G}^0$.  Thus $r(e)\in \overline{\mathcal{H}}$. By Lemma \ref{hereditarysaturationsmallest}, $r(e)=A_w\cup F$ for some $A_w\in \mathcal{H}$ and some finite set $F\subseteq \bigcup_{i=1}^{\infty}S_i$. Then $w\rightarrow A_w$ and $r(e)\setminus A_w$ is finite, showing item (3).
	
($\Longleftarrow$) Let $\mathcal{H}$ be a nonempty saturated hereditary subset of $\mathcal{G}^0$. We first claim that for every $w\in \mathcal{G}^0$ with $\left\{w\right\} \notin \mathcal{H}$, there exists an edge $e\in \mathcal{G}^1$ such that $s(e)=w$ and $r(e)$ contains a vertex $w'$ for which $\left\{w'\right\}\notin \mathcal{H}$. Indeed, since 
$\{w\}\notin \mathcal{H}$ and $G^0\ge \left\{v\right\}$ for every singular vertex $v$, $w$ is a regular vertex. For otherwise, if $w$ is a singular then $G^0\ge \{w\}$, but since $\mathcal{H}\ne \varnothing$ so that there exists a vertex $v\in G^0$ such that $\{v\}\in \mathcal{H}$ and $v\ge w$. Since $\mathcal{H}$ is hereditary, we must have that $\{w\}\in \mathcal{H}$, a contradiction.
	
Now, since $\mathcal{H}$ is saturated, there exists an edge $e\in \mathcal{G}^1$ such that $s(e)=w$ and $r(e)\notin \mathcal{H}$. If $r(e)$ is finite, then since $\mathcal{H}$ is closed under unions, there must exist a vertex $w'\in r(e)$ such that $\{w'\}\notin \mathcal{H}$, as desired. Consider the case that $r(e)$ is infinite. If  $\{v\} \notin \mathcal{H}$ every vertex $v\in r(e)$, then the claim is obvious. If there exists a vertex $x\in G^0$ such that $\{x\}\in \mathcal{H}$, then there exists $A_x\subseteq r(e)$ such that $w\rightarrow A_x$ and $r(e)\setminus A_x$ is a finite set. Let $\alpha_1, \alpha_2,\hdots, \alpha_n\in \mathcal{G}^*$ be paths with $s(\alpha_i)=x$ and $A_x\subseteq \bigcup_{i=1}^nr(\alpha_i)$. Since $\{x\}\in \mathcal{H}$ and $\mathcal{H}$ is hereditary,
$r(\alpha_i)\in \mathcal{H}$ for all $i$, and so
$\bigcup_{i=1}^nr(\alpha_i)\in \mathcal{H}$.  If $r(e)\setminus A_x\in \mathcal{H}$, then $\bigcup_{i=1}^nr(\alpha_i)\cup (r(e)\setminus A_x)$ is an element in $\mathcal{H}$ containing $r(e)$. Then, since $\mathcal{H}$ is hereditary, we immediately get that $r(e)\in \mathcal{H}$, a contradiction. Therefore, we must have that $r(e)\setminus A_x$ is not in $\mathcal{H}$, that means, $\{w'\}\notin \mathcal{H}$ for some $w'\in r(e)\setminus A_x$, as desired. 
	
Now suppose that $\mathcal{H}\neq \mathcal{G}^0$. Then, 
there exists $\left\{w_1\right\}\notin \mathcal{H}$. By the above claim, there exist an edge $e_1$ and a vertex $w_2$ such that $s(e_1)=w_1, w_2 \in r(e_1)$, and $\left\{w_2\right\}\notin\mathcal{H}$. Continuing inductively, we create an infinite path $e_1e_2e_3\cdots$ with $\{s(e_i)\}\notin \mathcal{H}$ for all $i$. Since $\mathcal{H}\neq \varnothing$, there exists a vertex $v\in \mathcal{G}^0$ with $\{v\}\in \mathcal{H}$. By condition (1), we obtain that $v\ge s(e_n)$ for some $n$. Then, since $\mathcal{H}$ is hereditary and $\{v\}\in \mathcal{H}$, we must have that $\{s(e_n)\}\in \mathcal{H}$, a contradiction. It implies that  $\mathcal{H} =\mathcal{G}^0$, thus finishing the proof.
\end{proof}

We now have all the necessary ingredients in hand to prove the first main result of this section, which both characterizes the purely infinite simple Leavitt path algebras in terms of properties of the associated graph and extends Abrams and Aranda-Pino's result \cite[Theorem 11]{ap:pislpa06} to  ultragraph Leavitt path algebras.

\begin{thm}\label{purelysimple}
Let $\mathcal{G}$ be an ultragraph and $K$ a field. Then $L_K(\mathcal{G})$ is purely infinite simple if and only if the following three conditions are satisfied: 

$(1)$ The only hereditary and saturated subsets of $\mathcal{G}^0$ are $\emptyset$ and $\mathcal{G}^0$;

$(2)$ Every cycle in $\mathcal{G}$ has an exit;

$(3)$ Every vertex connects to a cycle.\\
Equivalently, $(3)$ may be replaced by:

$(3')$ $\mathcal{G}$ contains at least one cycle.     
\end{thm}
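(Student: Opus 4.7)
The plan is to apply Lemma~\ref{pisimple}: $L_K(\mathcal{G})$ is purely infinite simple if and only if it is not a division ring and for every pair of nonzero $\alpha,\beta\in L_K(\mathcal{G})$ there exist $a,b\in L_K(\mathcal{G})$ with $a\alpha b=\beta$. Before tackling either implication I would dispose of the equivalence of $(3)$ and $(3')$ in the presence of $(1)$: assuming $(1)$, the set $W:=\{v\in G^0\mid v \text{ does not connect to a cycle}\}$ yields a saturated hereditary subset $\mathcal{H}:=\{A\in\mathcal{G}^0\mid A\subseteq W\}\in\mathcal{H}_{\mathcal{G}}$, which by $(1)$ is either $\emptyset$ or $\mathcal{G}^0$; since $(3')$ produces a cycle and hence a vertex that does connect to one, $\mathcal{H}\neq\mathcal{G}^0$, so $W=\emptyset$, i.e., $(3)$ holds.

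For the forward direction $(\Longrightarrow)$ I would derive each condition by contradiction. For $(1)$, the correspondence between graded ideals of $L_K(\mathcal{G})$ and saturated hereditary subsets given by \cite[Theorem 3.4]{ima:tlpaou} converts any non-trivial saturated hereditary subset into a non-trivial graded ideal, contradicting simplicity. For $(2)$, if a cycle $c$ has no exit then a case analysis of the definition of exit forces $r(c_i)=\{s(c_{i+1})\}$ for every $i$ and each $s(c_i)$ to emit only the cycle edge; the subalgebra of $L_K(\mathcal{G})$ generated by $\{s_c,s_c^*,p_{s(c)}\}$ is then isomorphic to $K[x,x^{-1}]$, inside which $p_{s(c)}$ is Dedekind-finite, contradicting pure infiniteness. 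For $(3')$, if $\mathcal{G}$ were acyclic then Theorem~\ref{regularity} would make $L_K(\mathcal{G})$ locally matricial, hence every idempotent Dedekind-finite, again incompatible with pure infiniteness.

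For the reverse direction $(\Longleftarrow)$, the non-division-ring property follows from $(2)$ and $(3')$: at some vertex $v$ on a cycle-with-exit, the Cuntz-Krieger relation (or, if $v$ is an infinite emitter, the mere existence of two distinct edges emanating from $v$) yields an orthogonal decomposition of $p_v$ with at least two nonzero summands, ruling out the division-ring possibility (which admits only $0$ and $1$ as idempotents). The heart of the argument is the element-transfer step: given nonzero $\alpha,\beta$, produce $a,b$ with $a\alpha b=\beta$. My strategy is first to reduce $\alpha$ to the form $kp_v$ for some $k\in K^{\times}$ and some vertex $v\in G^0$; then use $(3)$ to find a path $\mu$ from $v$ to a vertex $u$ on a cycle $d$ with exit, transferring $p_v$ to $p_u$ via $s_\mu$ and $s_\mu^*$; next use $(1)$ via Lemma~\ref{gradedsimpleulgraph} to connect $u$ to any chosen vertex appearing in a standard-form decomposition of $\beta$; and finally exploit the exit of $d$ to build mutually orthogonal equivalent idempotents in $p_uL_K(\mathcal{G})p_u$, assembling $\beta$ from $p_u$ by appropriate left- and right-multiplications.

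The principal obstacle will be the reduction step. Writing $\alpha$ in the standard form of Lemma~\ref{graded}(2) and collecting the edges and singular vertices appearing in $\alpha$ into a finite set $F\subseteq\mathcal{G}^1\cup\text{Sing}(\mathcal{G})$, one realizes $\alpha$ inside the finite subalgebra $B_n\cong L_K(G_{F})$ via Lemma~\ref{generators} and \cite[Lemma 2.13]{ima:tlpaou}, and then applies the classical Reduction Theorem for graph Leavitt path algebras to produce an element of the form $kp_{v'}$ with $v'\in G_F^0$. Care is required when translating $v'$ back, since $G_F^0=F^0\cup F^1\cup\Gamma_F$ includes auxiliary vertices from $\Gamma_F$ that correspond to finite intersections of range sets of edges rather than honest vertices of $\mathcal{G}$; in each case one must further transfer the resulting idempotent to a genuine vertex of $\mathcal{G}$ on a cycle with exit, and this is precisely where the combined strength of conditions $(1)$ and $(3)$ is used.
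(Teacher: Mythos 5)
Your overall skeleton matches the paper's: Lemma~\ref{pisimple} plus local units, reduce $\alpha$ to a projection, use condition (3) to move it onto a cycle, build an orthogonal family of partial isometries to reassemble $\beta$, and get the non-division-ring property from a zero divisor produced by an exit. The paper, however, obtains (1) and (2) in the forward direction by citing the simplicity theorem \cite[Theorem 4.7]{gr:saccfulpavpsgrt} rather than arguing them directly, and --- crucially --- performs the reduction step by invoking the Reduction Theorem for ultragraph Leavitt path algebras \cite[Theorem 3.2]{gon:ratrt19}, which outputs either $p_{_A}$ for a nonempty $A\in\mathcal{G}^0$ or a polynomial in a cycle of $\mathcal{G}$ without exit; the latter outcome is excluded outright by condition (2).

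Two steps of your plan have genuine holes. First, your reduction via $B_n\cong L_K(G_F)$ and the classical graph Reduction Theorem can terminate in a polynomial $\sum_i k_i s_c^i$ where $c$ is a cycle without exit \emph{in the finite graph} $G_F$; condition (2) only guarantees an exit in $\mathcal{G}$, which may involve edges outside $F$, so you cannot rule this outcome out, and enlarging $F$ to capture those exits can create new cycles in the enlarged $G_{F'}$, so it is not clear the process terminates. This is precisely the difficulty the paper sidesteps by working with the ultragraph-level Reduction Theorem. Second, ``exploit the exit of $d$ to build mutually orthogonal equivalent idempotents'' is not enough: an exit merely leaves the cycle, whereas the family $c_m = p^{m-1}q$ with $s_{c_m}^* s_{c_n} p_v = \delta_{m,n} p_v$ requires two distinct closed paths based at the same vertex, neither a subpath of the other; the paper obtains these from condition (1) via \cite[Lemma 6.1]{Larki:piapioua19}, not from the existence of an exit. (Relatedly, your forward-direction argument for (2) needs the identification of the full corner $p_{s(c)} L_K(\mathcal{G}) p_{s(c)}$ with $K[x,x^{-1}]$, not merely of the subalgebra generated by $s_c, s_c^*, p_{s(c)}$, before Dedekind-finiteness of $p_{s(c)}$ contradicts pure infiniteness.) On the other hand, your direct derivation of (3) from (3$'$) and (1) via the saturated hereditary set of vertices not connecting to a cycle is correct and slightly more self-contained than the paper's, which routes through Lemma~\ref{gradedsimpleulgraph}(1).
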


\begin{proof} ($\Longrightarrow$) Assume that $L_K(\mathcal{G})$ is purely infinite simple. By \cite[Theorem 4.7]{gr:saccfulpavpsgrt} we have (1) and (2). If $\mathcal{G}$
has no cycles, i.e. $\mathcal{G}$ is acyclic, then by Theorem~\ref{regularity}, $L_K(\mathcal{G})$ is a locally matricial $K$-algebra, that is, it is a direct union of matricial $K$-subalgebras. Since every matricial $K$-algebra is finite dimentional, and by \cite[Lemma 8]{ap:pislpa06}, $L_K(\mathcal{G})$ is not purely infinite, a contradiction. Hence, $\mathcal{G}$ must contain at least one cycle $c$. We then have an infinite path $c^{\infty}:= cc\cdots c\cdots$ in $\mathcal{G}$. By Lemma~\ref{gradedsimpleulgraph}, every vertex connects to $c^{\infty}$; equivalently, every vertex connects to $c$, proving (3). Notice that the above argument shows that conditions (3) and (3') are equivalent in the presence of conditions (1) and (2).
	 
($\Longleftarrow$)	Assume that (1), (2) and (3) hold. By \cite[Theorem 4.7]{gr:saccfulpavpsgrt} we immediately obtain that $L_K(\mathcal{G})$ is simple. By Lemmas~\ref{localunit} and~\ref{pisimple}, it suffice to show that $L_K(\mathcal{G})$ is not a division ring, and that for every pair of nonzero elements $\alpha, \beta$ in $L_K(\mathcal{G})$ there exist elements $a, b$ in $L_K(\mathcal{G})$ such that $a \alpha b = \beta$. Condition (3) implies that there exists a cycle $c=e_1e_2\cdots e_n$ with $n\ge 1$. By condition (2), $c$ has an exit, that is, there exists  $f\in \mathcal{G}^1$ such that $s(f)\in r(e_i)$ and $f\neq e_{i+1}$ for some $1\le i \le n$ (where $e_{n+1}:= e_1$), or $r(e_j)$ contains a sink $v$ for some $j$. If the first case occurs, then we have that $s^*_f s_{e_{i+1}} = 0$. If the second one occurs, then we have that $p_vs_{e_j} = p_v p_{s(e_j)}s_{e_j} = 0\cdot s_{e_j} = 0$. This implies that $L_K(\mathcal{G})$ has zero divisors, and thus $L_K(\mathcal{G})$ is not a division ring.

We now apply the Reduction Theorem \cite[Theorem 3.2]{gon:ratrt19} to find $\overline{a}, \overline{b} \in L_K(\mathcal{G})$ such that either $\overline{a}\alpha \overline{b} = p_{_A}$ for some nonempty set $A\in\mathcal{G}^0$, or $\overline{a}\alpha \overline{b} = \sum^m_{i=1}k_is^i_{c}$, where $c$ is a cycle without exit. By condition (2), the later can not happen, and so we must obtain that $\overline{a}\alpha \overline{b} = p_{_A}$. Take any $w\in A$. We then have
\begin{center}
$p_w\overline{a}\alpha \overline{b} = p_w p_{_A} = p_w.$	
\end{center} 
By condition (3), $w$ connects to a cycle $c'=e'_1e'_2\cdots e'_n$, and so there exists path $p\in \mathcal{G}^*$ with $s(p)=w$ and $s(e'_j)\in r(p)$ for some $1\le j \le n$. We note that $s_p^*p_ws_p = s_p^*s_p = p_{r(p)}$. Let $v := s(e'_j)$, $a':=s_p^*$ and $b':=s_pp_v$. We have that $$a'p_wb' = s_p^*p_ws_pp_v=p_{r(p)}p_v=p_v.$$ 

Since $\mathcal{H}_{\mathcal{G}}= \{\varnothing, \mathcal{G}^0\}$ and by \cite[Lemma 6.1]{Larki:piapioua19}, there exist two distinct cycles $p, q $ with $v:=s(p)= s(q)$, $p$ is not a subpath of $q$, and $q$ is also not a subpath of $p$. For any $m\ge 1$ let $c_m$ denote the path $p^{m-1}q$, where $p^0:= p_v$. We then have that $s_{c_m}^*s_{c_n} = \delta_{m, n}p_{r(q)}$ for every $m,n \ge 1$, where $\delta$ is the Kronecker delta, and so $s_{c_m}^*s_{c_n}p_v = \delta_{m, n}p_{r(q)}p_v=\delta_{m, n}p_v$ for every $m,n\ge 1$ (since $v\in r(q)$).

Since $L_K(\mathcal{G})$ is simple and $p_v\neq 0$, there exist $\{a_i, b_i\in L_K(\mathcal{G})\mid 1\ge i\ge t\}$ such that
$\beta =\sum\limits_{i=1}^ta_ip_vb_i$. Let $a_{\beta} :=\sum\limits_{i=1}^ta_is_{c_i}^*$ and $b_{\beta} := \sum\limits_{j=1}^ts_{c_j}b_j$. We then obtain that $$a_{\beta}p_vb_{\beta}=(\sum\limits_{i=1}^ta_is_{c_i}^*)p_v(\sum\limits_{j=1}^ts_{c_j}b_j) = \sum\limits_{i=1}^ta_ip_vb_j = \beta.$$

Finally, letting $a=a_{\beta}a'p_w\overline{a}$ and  $b=\overline{b}b'b_{\beta}$, we immediately get that $a\alpha b = \beta$, as desired, thus finishing the proof.
\end{proof} 

%In \cite[Theorem 3.4]{ima:tlpaou}, the authors completely described graded ideals of ultragraph Leavitt path algebras.In \cite[Lemma 3.1]{ima:tlpaou}, the authors showed that if $\mathcal{H}$ is a saturated and hereditary subset of $\mathcal{G}^0$, then $I(\mathcal{H})$ is spanned as a $K$-vertor space by $\{s_{\alpha}p_{_A}s^*_{\beta}\mid \alpha, \beta \in \mathcal{G}^*, A\in\mathcal{H}, r(\alpha)\cap A\cap r(\beta) \neq \varnothing\}$. 
For an ultragraph $\mathcal{G}$ and subset $\mathcal{H}\subseteq \mathcal{G}$, we denote by $I(\mathcal{H})$ the ideal of $L_K(\mathcal{G})$ generated by the idempotents $\{p_{_A}\mid A \in \mathcal{H}\}$. If $\mathcal{H} = \{v\}$ with $v\in G^0$, then we denote $I(\mathcal{H})$ by $I(v)$.
Let $c= e_1e_2\cdots e_n$ be a cycle without exits in $\mathcal{G}$. We then have that $s(e_i)\neq s(e_j)$ for all $1\le i\neq j\le n$, and $r(e_i) = \{s(e_{i+1})\}$ for all $1\le i\le n$ (where $e_{n+1}:= e_1$). We denote by $c^0$ the set of all subsets of $\{s(e_i)\mid 1\le i\le n\}$. It is obvious that $c^0$ is a hereditary subset of $\mathcal{G}^0$. We say that a path $p$ \textit{ends at a vertex} $v$ if $r(p) =\{v\}$.
The following lemma provides us with a complete description of the ideal $I(c^0)$.  

\begin{lem}\label{graphonecyclewithoutexit}
Let $\mathcal{G}$ be an ultragraph, $K$ a field, and $c$ a cycle without exits. Let $v:=s(c)$, and let $\Lambda$ be the (possibly infinite) set of all finite paths in $\mathcal{G}$ which end at $v$, but which do not contain all the edges of $c$. Then $$I(\overline{c^0})=I(c^0)=I(v)\cong M_{\Lambda}(K[x,x^{-1}]),$$
where $\overline{c^0}$ is the smallest saturated hereditary subset of $\mathcal{G}^0$ containing $c^0$.
\end{lem}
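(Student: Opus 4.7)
My plan is to reduce the three ideal identities to $I(v)$ and then build an explicit $K$-algebra isomorphism $\phi:M_\Lambda(K[x,x^{-1}])\to I(v)$.

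\smallskip

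\emph{Equality of the three ideals.} Since $\{v\}\in c^0$, one has $p_v\in I(c^0)$ and so $I(v)\subseteq I(c^0)$. For the reverse inclusion, writing $\eta_i:=e_1\cdots e_{i-1}$ gives $p_{s(e_i)}=s_{\eta_i}^*s_{\eta_i}=s_{\eta_i}^*p_v s_{\eta_i}\in I(v)$, and since each $A\in c^0$ is a finite subset of cycle vertices, $p_A\in I(v)$. For the passage to $I(\overline{c^0})$, Lemma~\ref{hereditarysaturationsmallest} writes every $X\in\overline{c^0}$ as $A\cup F$ with $A\in c^0$ and $F\subseteq\bigcup_i S_i$, and an induction using $p_w=\sum_{s(e)=w}s_e p_{r(e)}s_e^*$ for $w\in S_i$ shows $p_X\in I(c^0)$.

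\smallskip

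\emph{Cycle structure.} Because $c$ has no exits, every vertex $s(e_i)$ is regular with unique outgoing edge $e_i$, and a telescoping argument using $p_{s(e_i)}=s_{e_i}s_{e_i}^*$ collapses to $s_c s_c^*=p_v$; together with $s_c^*s_c=p_{r(c)}=p_v$, this makes $s_c$ a unit in the corner $p_v L_K(\mathcal{G})p_v$. The powers $\{s_c^j\}_{j\in\mathbb{Z}}$ (with $s_c^{-k}:=(s_c^*)^k$) are nonzero because $s_c^j(s_c^j)^*=p_v\neq 0$ and, occupying distinct components of the $\mathbb{Z}$-grading of Lemma~\ref{graded}, are $K$-linearly independent. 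The only paths starting at $v$ are prefixes of $c^\infty$, so any path $\alpha$ ending at $v$ admits a unique factorization $\alpha=\lambda c^k$ with $\lambda\in\Lambda$ and $k\ge 0$. The same observation yields the crucial orthogonality
\[
  s_\mu^*s_\lambda=\delta_{\mu,\lambda}\,p_v\qquad(\lambda,\mu\in\Lambda),
\]
since a proper prefix relation $\lambda=\mu\nu$ with $\lambda,\mu$ both ending at $v$ would force $\nu$ to be a path from $v$ to $v$, hence $\nu=c^m$ for some $m\ge 1$, putting $c$ as a subpath of $\lambda$ and contradicting $\lambda\in\Lambda$.

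\smallskip

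\emph{The isomorphism and the main obstacle.} Define $\phi(e_{\lambda,\mu}\,x^j):=s_\lambda s_c^j s_\mu^*$ on matrix-unit basis elements and extend $K$-linearly. The orthogonality above together with $p_v s_c^j=s_c^j=s_c^j p_v$ gives multiplicativity, and injectivity follows by left- and right-multiplying a vanishing combination by $s_{\lambda_0}^*$ and $s_{\mu_0}$, reducing it to a dependence in the independent family $\{s_c^j\}$. The main obstacle is surjectivity. Starting from a generator $s_\alpha p_A s_\beta^*\cdot p_v\cdot s_\gamma p_B s_\delta^*$ of $I(v)$, the identities $s_\beta^*p_v=\delta_{s(\beta),v}s_\beta^*$ and $p_v s_\gamma=\delta_{v,s(\gamma)}s_\gamma$ force $\beta$ and $\gamma$ to be prefixes of $c^\infty$; collapsing $s_\beta^*s_\gamma$ then rewrites the expression as a sum of terms $s_{\alpha'}p_w s_{\delta'}^*$ with $w=s(e_i)$ on $c$. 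Using $p_w=s_{\tau_w}s_{\tau_w}^*$ for the cycle suffix $\tau_w:=e_i\cdots e_n$, this becomes $s_{\alpha'\tau_w}\,s_{\delta'\tau_w}^*$; both paths $\alpha'\tau_w$ and $\delta'\tau_w$ end at $v$, so the unique factorization of Step~2 yields $\lambda,\mu\in\Lambda$ and $k,l\ge 0$ with the term equal to $s_\lambda s_c^{k-l}s_\mu^*\in\mathrm{Im}(\phi)$. Careful book-keeping of the cutoff sets $A,B$ and of the various prefix cases for $s_\beta^*s_\gamma$ constitutes the essential labour.
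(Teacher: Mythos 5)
Your proposal is correct and follows essentially the same route as the paper: the ideal equalities are obtained by the same computations ($p_{s(e_i)}=s_{\eta_i}^*p_vs_{\eta_i}$ and the induction along the saturation $\mathcal{H}_n$ using $p_w=\sum_{s(e)=w}s_ep_{r(e)}s_e^*$), and the isomorphism is built from the same basis $\{s_\lambda s_c^k s_\mu^*\}$ via $s_cs_c^*=p_v$ and the factorization $\alpha=\lambda c^k$. Your only real deviation is a welcome streamlining: the orthogonality $s_\mu^*s_\lambda=\delta_{\mu,\lambda}p_v$ for $\lambda,\mu\in\Lambda$ lets you isolate each matrix entry directly and read off linear independence from the $\mathbb{Z}$-grading, where the paper runs a case-by-case induction on the number of terms.
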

\begin{proof}
We first claim that $$I(v)=\text{Span}_K\{s_{\alpha}s_{\beta}^*\mid \alpha, \beta\in\mathcal{G}^*, r(\alpha) = \{v\}=r(\beta)\}.$$ 
Indeed, we denote by $J$ the right-hand side of the above equality. We note that for every $\alpha,\beta, \mu, \nu \in \mathcal{G}^*\setminus \mathcal{G}^0$ and every $A,B \in \mathcal{G}^0$, we have  \begin{align*}(s_{\alpha}p_{_A}s_{\beta}^*)(s_{\mu}p_{_B}s_{\nu}^*)= \begin{cases} s_{\alpha\mu'}p_{_B}s_{\nu}^* &\textnormal{if }  \mu=\beta\mu', s(\mu')\in A\cap r(\alpha) \textnormal{ and } |\mu'|\ge 1, \\ s_{\alpha}p_{_{A\cap r(\beta)\cap B}}s_{\nu}^* & \textnormal{if } \mu=\beta, \\ s_{\alpha}p_{_A}s_{\nu\beta'}^* &\textnormal{if }  \beta=\mu\beta', s(\beta')\in B\cap r(\nu) \textnormal{ and } |\beta'|\ge 1, \\ 0 &\textnormal{otherwise}.\end{cases}\end{align*} 
By this note and Lemma~\ref{graded} (2), we immediately get that $J$ is an ideal of $L_K(G)$. Moreover, we have $p_v = p_vp_v\in J$, i.e. $J$ is an ideal of $L_K(\mathcal{G})$ containing $p_v$, and so $I(v)\subseteq J$. For $\alpha, \beta\in\mathcal{G}^*$ with $r(\alpha) = \{v\}=r(\beta)$, we obtain that $s_{\alpha}s_{\beta}^* = s_{\alpha}p_vs_{\beta}^*\in I(v)$, and hence $J\subseteq I(v)$, thus proving the claim. 

It is obvious that $I(v)\subseteq I(c^0) \subseteq I(\overline{c^0})$. To show the inverse inclusions, we write $c= e_1e_2\cdots e_n$ $(n\ge 1)$ and $v_i:= s(e_{i+1})$ for all $1\le i\le n-1$ (note that $v= s(c) = s(e_1)$). For $1\le i\le n-1$, there exists a path $p:= e_1\cdots e_{i}$ with $s(p) = v$ and $r(p) = \{v_i\}$, and so $p_{v_i} = s^*_p s_p = s^*_pp_v s_p\in I(v)$. Take any a non-empty set $A\in c^0$. We then have that $A$ is a subset of $\{v, v_i\mid 1\le i\le n-1\}$, and $p_{_A} = \sum_{w\in A}p_w \in I(v)$. This implies that $I(c^0)\subseteq I(v)$, and so $I(v)= I(c^0)$.

In order to see that $I(c^0)=I(\overline{c^0})$, it is sufficient to prove that
\begin{center}
$p_{_B} \in I(c^0)$\quad\quad for all $B\in \overline{c^0}$. 	
\end{center} 
We first note that since $c^0$ is a hereditary subset of $\mathcal{G}^0$ and by Lemma~\ref{hereditarysaturationsmallest}, we have that $$\overline{c^0} = \bigcup\limits_{i=0}^{\infty}\mathcal{H}_i,$$ 
where $\mathcal{H}_0:= c^0$ and  
\begin{center} 
	$\mathcal{H}_{n+1}:= \{A\cup F\mid A\in \mathcal{H}_n$ and $F \text{ is a finite subset of } S_n\}$,
\end{center}
\begin{center} 
$S_n:= \{w\in G^0\mid 0<|s^{-1}(w)|<\infty$ and $\{r(e)\mid s(e)=w\}\subseteq \mathcal{H}_n\rbrace$. 
\end{center}  

We shall claim by proving inductively that $p_{_B}\in I(c^0)$ for all $B\in \mathcal{H}_n$. Indeed, if $n=0$, then $B\in c^0$, and so $p_{_B}\in I(c^0)$. Now we proceed inductively. For $n>1$, we have that $B = A\cup F$ for some $A\in \mathcal{H}_{n-1}$ and for some finite subset $F\subseteq S_{n-1}$. As is shown in the proof of \cite[Lemma 3.12]{tomf:soua} that $\mathcal{H}_i$ is hereditary for all $i\in \mathbb{N}$. By this fact and the induction hypothesis, we get that $p_{_A}$, $p_{_{(A\cap F)}}\in I(c^0)$, and $p_{r(e)}\in I(c^0)$ for all $e\in \mathcal{G}^1$ with $r(e)\in \mathcal{H}_{n-1}$.

Write $F=\{w_1, w_2,\hdots, w_k\}\subseteq S_{n-1}$. Then, for $1\le i\le k$, $w_i$ is a regular vertex and $r(e)\in \mathcal{H}_{n-1}$ for all $e\in s^{-1}(w_i)$. This implies that

$$p_{w_i}=\sum\limits_{e\in s^{-1}(w_i)}s_es_e^*=\sum\limits_{e\in s^{-1}(w_i)}s_ep_{r(e)}s_e^*\in I(c^0),$$ 
so $p_{_F}=\sum\limits_{i=1}^kp_{w_i}\in I(c^0)$ and $p_{_B} = p_{_{A\cup F}} = p_{_A} + p_{_F} - p_{_{A\cap F}}\in I(c^0)$, showing the claim. Using this claim and the above note, we immediately get that $p_{_B} \in I(c^0)$ for all $B\in \overline{c^0}$, and so $I(c^0)=I(\overline{c^0})$.

Consider the family $$\mathcal{B}:=\{s_{\alpha}s_c^ks_{\beta}^*\mid \alpha, \beta \in \Lambda, k\in \mathbb{Z}\},$$ where as usual $s_c^0$ denotes $p_v$ and $c^k$ denotes $(s_c^*)^{-k}$ for $k<0$. 

We note that since $c$ is a cycle without exits, we have $s_cs^*_c = p_v$. Also, for $\alpha\in \mathcal{G}^*$ with $r(\alpha) = \{v\}$, it may be written in the form: $\alpha = p c^m$ for some $p\in \Lambda$ and for some $m\in \mathbb{N}$ (where $c^0:= v$). Using this note, we immediately obtain that for all $p, q\in\mathcal{G}^*$ with $r(p) = \{v\}=r(q)$, $s_ps^*_q = s_{\alpha}s_c^ks_{\beta}^*$ for some $\alpha, \beta\in \Lambda$ and for some $k\in \mathbb{Z}$. This implies that $\mathcal{B}$ generates $I(v)$.

We next claim that $\mathcal{B}$ is a $K$-linearly indenpendent set. Consider the equation
\begin{equation*}
\sum\limits_{i=1}^nk_is_{\alpha_i}s_c^{m_i}s^*_{\beta_i}=0\tag{\mbox{$\ast$}}
\end{equation*}
where $k_i, m_i\in \mathbb{N}$ and $\alpha_i, \beta_i\in \Lambda$. By induction on $n$ we prove that $k_i=0$ for all $1\le i\le n$. If $n = 1$, then we have $k_1s_{\alpha_1}s_c^{m_1}s^*_{\beta_1}=0$, and so $k_1p_{v} = k_1(s^*_c)^{m_i}s^*_{\alpha_1} s_{\alpha_1}s_c^{m_1}s^*_{\beta_1} s_{\beta_1} = 0$. By Lemma~\ref{graded} (1), $p_v\neq 0$, and hence $k_1 = 0$. Now we proceed inductively. For $n>1$, if $s(\alpha_1) \neq s(\alpha_j)$ for some $1 \le j\le n$, then we have 
\begin{equation*}
\sum\limits_{i=2}^nk_ip_{s(\alpha_j)}s_{\alpha_i}s_c^{m_i}s^*_{\beta_i} = p_{s(\alpha_j)}(\sum\limits_{i=1}^nk_is_{\alpha_i}s_c^{m_i}s^*_{\beta_i})=0.\tag{\mbox{$\ast\ast$}}
\end{equation*}
Using equations  $(\ast)$ and $(\ast\ast)$, and the induction hypothesis, we immediately get that $k_i = 0$ for all $i$. If $s(\beta_1) \neq s(\beta_j)$ for some $1 \le j\le n$, then we have
\begin{equation*}
\sum\limits_{i=2}^nk_is_{\alpha_i}s_c^{m_i}s^*_{\beta_i}p_{s(\beta_j)} = (\sum\limits_{i=1}^nk_is_{\alpha_i}s_c^{m_i}s^*_{\beta_i})p_{s(\beta_j)}=0.\tag{\mbox{$\ast\ast\ast$}}
\end{equation*}
Using equations  $(\ast)$ and $(\ast\ast\ast)$, and the induction hypothesis, we obtain that $k_i = 0$ for all $i$.

Consider the case that $s(\alpha_1) = s(\alpha_i)$  and $s(\beta_1) = s(\beta_i)$ for all $i$. Then, since $r(\alpha_i) = \{v\} = r(\beta_i)$ for all $i$, we have that
\begin{center}
$s^*_{\alpha_1}s_{\alpha_i} =\delta_{\alpha_1, \alpha_i}p_{r(\alpha_1)} =\delta_{\alpha_1, \alpha_i}p_{v}$
and $s^*_{\beta_i}s_{\beta_1} = \delta_{\beta_i, \beta_1}p_{r(\beta_1)}=\delta_{\beta_i, \beta_1}p_{v}$	
\end{center}  
for all $i$ (where $\delta$ is the Kronecker delta), and 
\begin{equation*}
0 = s^*_{\alpha_1}(\sum\limits_{i=1}^nk_is_{\alpha_i}s_c^{m_i}s^*_{\beta_i})s_{\beta_1}= \sum_{j=1}^r k_{i_j}s^{m_{i_j}}_c
\end{equation*}
where $1\le r\leq n$, $\{i_j\mid 1\le j\le r\}\subseteq \{1, 2,\hdots, n\}$ and $\alpha_1 = \alpha_{i_j}$, $\beta_1 = \beta_{i_j}$ for all $1\le j\le r$, and $m_{i_j}$'s are distinct.
Now the grading in $L_K(\mathcal{G})$ (see Lemma~\ref{graded} (2)) shows that $k_{i_j} = 0$ for all $1\le j\le r$. From this observation and equation $(\ast)$, and by the induction hypothesis, we obtain that $k_i =0$ for all $i$, showing the claim. Therefore $\mathcal{B}$ is a $K$-basis for $I(v)$.

We define $\phi : I(v)\longrightarrow M_{\Lambda}(K[x,x^{-1}])$ by setting $\phi (s_{\alpha}s_c^ks_{\beta}^*)=x^kE_{\alpha,\beta}$ for each $s_{\alpha}s_c^ks_{\beta}^*\in \mathcal{B}$, where $x^kE_{\alpha,\beta}$ denotes the element of $M_{\Lambda}(K[x,x^{-1}])$ which is $x^k$ in the $(\alpha, \beta)$ entry, and zero otherwise. Then we easily check that $\phi$ is a $K$-algebra isomorphism, thus finishing the proof.	
\end{proof}

We now have all the tools necessary to generalize \cite[Theorem~3.1.14]{AAS} which the authors of \cite{AAS} call the Trichotomy Principle for graded simple Leavitt path algebras of graphs. We prove this principle for graded simple ultragraph Leavitt path algebras.

\begin{thm}\label{TrichotomyPrinciple}
Let $\mathcal G$ be an ultragraph and $K$ a field. If $L_K(\mathcal G)$ is graded simple, then exactly one of the following occurs:
	
$(1)$ $L_K(\mathcal{G})$ is locally matricial, or
	
$(2)$  $L_K(\mathcal{G}) \cong M_{\Lambda}(K[x,x^{-1}])$ for some set $\Lambda$, or
	
$(3)$  $L_K(\mathcal{G})$ is purely infinite simple.
\end{thm}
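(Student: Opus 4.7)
The plan is to convert graded simplicity into a single combinatorial hypothesis on $\mathcal{G}$ and then run a case analysis based on the presence and behaviour of cycles. By \cite[Theorem~3.4]{ima:tlpaou}, the graded simplicity of $L_K(\mathcal{G})$ forces $\mathcal{H}_{\mathcal{G}}=\{\varnothing,\mathcal{G}^0\}$, which will be the standing assumption throughout. I then split into three mutually exclusive situations for $\mathcal{G}$: (a) $\mathcal{G}$ is acyclic; (b) $\mathcal{G}$ has a cycle and every cycle has an exit; (c) $\mathcal{G}$ has at least one cycle without any exit.

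In case (a), Theorem~\ref{regularity} immediately yields alternative (1): $L_K(\mathcal{G})$ is locally matricial. In case (b), the three hypotheses of Theorem~\ref{purelysimple} are in place: (1) is the standing assumption on $\mathcal{H}_{\mathcal{G}}$, (2) is the exit hypothesis, and (3$'$) holds by the existence of a cycle; hence $L_K(\mathcal{G})$ is purely infinite simple, giving alternative (3). In case (c), fix a cycle $c$ without exit; then $c^0$ is a non-empty hereditary subset of $\mathcal{G}^0$, so $\overline{c^0}\in\mathcal{H}_{\mathcal{G}}$ is non-empty, forcing $\overline{c^0}=\mathcal{G}^0$ by the standing assumption. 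Lemma~\ref{graphonecyclewithoutexit} then gives $I(\overline{c^0})\cong M_{\Lambda}(K[x,x^{-1}])$ for the set $\Lambda$ described there. Since $I(\overline{c^0})$ contains $p_{_A}$ for every $A\in\mathcal{G}^0$ and, by Lemma~\ref{localunit}, these idempotents are local units of $L_K(\mathcal{G})$, one obtains $I(\overline{c^0})=L_K(\mathcal{G})$, i.e.\ alternative (2).

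The three alternatives are pairwise incompatible: (1) cannot coincide with (3) by \cite[Lemma~8]{ap:pislpa06} (matricial subalgebras contain no infinite idempotents), while the presence of the no-exit cycle witnessing (2) forbids both acyclicity (so (2) differs from (1), by Theorem~\ref{regularity}) and the cycle-exit condition needed for (3) (via Theorem~\ref{purelysimple}~(2)). I expect case (c) to be the delicate step: one must recognise that a no-exit cycle produces a non-empty hereditary subset $c^0$, promote this through the saturated closure using graded simplicity to obtain $\overline{c^0}=\mathcal{G}^0$, and then combine Lemma~\ref{graphonecyclewithoutexit} with the local-units structure of Lemma~\ref{localunit} to upgrade an ideal isomorphism to an isomorphism of the whole algebra.
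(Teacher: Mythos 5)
Your proof is correct, and it reaches the conclusion through the same three structural results as the paper (Theorem~\ref{regularity} for the acyclic case, Theorem~\ref{purelysimple} for the purely infinite case, and Lemma~\ref{graphonecyclewithoutexit} together with Lemma~\ref{localunit} for the $M_{\Lambda}(K[x,x^{-1}])$ case), but your case decomposition is genuinely different. The paper trichotomizes on the \emph{number} of cycles in $\mathcal{G}$ (none, exactly one, at least two) and then must invoke Lemma~\ref{gradedsimpleulgraph}(1) twice: once to deduce that every cycle has an exit when there are at least two cycles (so that Theorem~\ref{purelysimple} applies), and once to deduce that the unique cycle has no exit when there is exactly one (so that Lemma~\ref{graphonecyclewithoutexit} applies). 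You instead trichotomize on the \emph{exit behaviour} of cycles (acyclic; every cycle has an exit; some cycle has no exit), which makes the hypotheses of Theorem~\ref{purelysimple} and of Lemma~\ref{graphonecyclewithoutexit} tautological in their respective cases and lets you bypass Lemma~\ref{gradedsimpleulgraph} entirely --- a mild simplification, and your observation that Lemma~\ref{graphonecyclewithoutexit} needs only the existence of one no-exit cycle (not uniqueness of the cycle) is what makes this work. What the paper's version buys in exchange is the sharper combinatorial dictionary recorded in its proof, namely that the three algebraic alternatives correspond exactly to $\mathcal{G}$ having zero, one, or at least two cycles. You also explicitly verify that the three alternatives are pairwise incompatible (so that ``exactly one'' holds), a point the paper leaves implicit; your argument there is sound, though note that it establishes exclusivity via the graph-theoretic characterizations in Theorems~\ref{regularity} and~\ref{purelysimple} rather than by comparing the algebras abstractly, which is the cleanest way to do it in this setting.
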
 
\begin{proof}
By \cite[Theorem~3.4]{ima:tlpaou}, the graded simplicity of $L_K(\mathcal{G})$ is equivalent to that $\mathcal{H}_{\mathcal{G}}= \{\varnothing, \mathcal{G}^0\}$. The three possibilities given in the statement correspond precisely to whether: (1) $\mathcal{G}$ contains no cycles; resp., (2) contains exactly one cycle; resp., (3) contains at least two cycles.

If $\mathcal{G}$ contains no cycle then (1) follows from Theorem~\ref{regularity}. Consider the case that $\mathcal{G}$ contains least two cycles. Let $c_1 =e_1\cdots e_n$ and $c_2=f_1\cdots f_m$ be two distinct cycles in $\mathcal{G}$. We then have two infinite paths $(c_1)^{\infty} = c_1c_1\cdots c_1\cdots$ and $(c_2)^{\infty} = c_2c_2\cdots c_2\cdots$.
Applying Lemma~\ref{gradedsimpleulgraph} (1), we immediately get that every vertex $v\in G^0$ connects to both $(c_1)^{\infty}$ and $(c_2)^{\infty}$; equivalently, $v$ connects to both
$c_1$ and $c_2$. Consequently, every cycle in $\mathcal{G}$ has an exit. Then, by Theorem~\ref{purelysimple}, we have that $L_K(\mathcal{G})$ is purely infinite simple.
	
Now suppose that $\mathcal{G}$ contains exactly one cycle $c=\alpha_1\alpha_2\cdots\alpha_n$. If $c$ has exits then  there exists  $f\in \mathcal{G}^1$ such that $s(f)\in r(e_i)$ and $f\neq e_{i+1}$ for some $1\le i \le n$ (where $e_{n+1}:= e_1$), or $r(e_j)$ contains a sink $v$ for some $j$. If the first case  occurs, then by Lemma~\ref{gradedsimpleulgraph} (1), every vertex in $r(f)$ connects to $c$, and so $\mathcal{G}$ has at least two cycles, a contradiction. If the second one occurs, then by Lemma~\ref{gradedsimpleulgraph} (1), $v$ connects to the infinite path $c^{\infty}:= c\cdots c\cdots$, and so $v$ is not a sink, a contradiction. Therefore, $c$ is a cycle without exits. Now, by Lemma~\ref{graphonecyclewithoutexit}, $I(\overline{c^0})\cong M_{\Lambda}(K[x, x^{-1}])$, where $\Lambda$ is the set of all finite paths in $\mathcal{G}$ which end at $v$, but which do not contain all the edges of $c$.
Since $\overline{c^0}$ is a non-empty saturated hereditary subset of $\mathcal{G}^0$ and $\mathcal{H}_{\mathcal{G}}= \{\varnothing, \mathcal{G}^0\}$, we must have that $\overline{c^0} = \mathcal{G}^0$, and so $I(\overline{c^0}) = L_K(\mathcal{G})$ by Lemma~\ref{localunit}. This implies that $L_K(\mathcal{G})\cong M_{\Lambda}(K[x,x^{-1}])$, thus finishing our proof.

 \end{proof} 

\vskip 0.5 cm \vskip 0.5cm {

\end{document}